\theoremstyle{plain}
\newtheorem{remark}{Remark}[section]
\newtheorem{theorem}{Theorem}[section]
\newtheorem{lemma}[theorem]{Lemma}
\newtheorem{proposition}[theorem]{Proposition}
\newtheorem{corollary}[theorem]{Corollary}
\begin{document}
\title{Posterior consistency for nonparametric hidden Markov models with finite state space}

\author{Elodie Vernet \\ elodie.vernet@math.u-psud.fr }

\maketitle

\begin{abstract}
In this paper we study posterior consistency for different topologies on the parameters for hidden Markov models
with finite state space.
We first obtain weak and strong posterior consistency for the marginal density function of finitely many consecutive observations.
We deduce posterior consistency for the different components of the parameter. 
\textcolor{black}{ We also obtain posterior consistency for marginal smoothing distributions in the discrete case.}
We finally apply our results to independent emission probabilities, translated emission probabilities and discrete HMMs,
under various types of priors. 

\vspace{0.5cm}
\noindent{\bf{Keywords:}}
Bayesian nonparametrics, consistency, hidden Markov models.
\end{abstract}
%
%

\section{Introduction}
\label{Sect:intro}

Hidden Markov models (HMMs) have been widely used in diverse fields such as speech recognition, 
genomics, econometrics since their introduction in \citet{BaPe66}. The books \citet{macdonald:zucchini:1997},  
\citet{macdonald:zucchini:2009} and \citet{CaMoRy05} provide several examples of applications of HMMs and give
a recent (for the latter) state of the art in the statistical analysis of HMMs. 
Finite state space HMMs are stochastic processes $(X_t,Y_t)_{t\in \mathbb{N}}$ such that $(X_t)_{t\in \mathbb{N}}$
is a Markov chain taking values in a finite set, and 
conditionally to $(X_t)_{t\in \mathbb{N}}$, the random variables $Y_t$, $t\in \mathbb{N}$, are independent,
the distribution of $Y_{t}$ depending only on $X_{t}$. The conditional distributions of $Y_{t}$ given $X_{t}$
for all possible values of $X_t$ are called emission distributions. 
The name ``hidden Markov model'' comes from the fact that the observations are 
 the $Y_{t}$'s only, 
one cannot access to the states $(X_t)_t$ of the Markov chain.
Finite state space HMMs can be used to model heterogeneous variables coming from different populations, the states of the (hidden) Markov chain defining the population the observed variable comes from.
HMMs are very popular dynamical models especially because of their computational tractability
since there exist efficient algorithms to compute the likelihood and to recover the posterior distribution
of the hidden states given the observations.

Frequentist asymptotic properties of estimators of HMMs parameters have been studied since the 1990s.
Consistency and asymptotic normality of the maximum likelihood estimator have been established in the parametric case, see \citet{DoMa01}, \citet{Do04} and references in \citet{CaMoRy05},
see also \citet{DoMoOlHa11} for the most general consistency result up to now. 
As to Bayesian asymptotic results, there are only very few and recent results,
see \citet{GuSh08} when the number of hidden states is known,  \citet{GaRo12} when the number of hidden states is unknown.
All these results concern parametric HMMs.

Non parametric HMMs in the sense that the form of the emission distribution is not specified have only 
very recently been considered, since identifiability remained an open problem until  \citet{GaRo13} and  \citet{GaClRo13}, who prove a general identifiability result.
Because parametric modeling of emission distributions may lead to poor results in practice,
in particular for clustering purposes, recent interest in using non parametric HMMs appeared in applications, 
see  \citet{YaPaRoHo11}, \citet{GaClRo13} and references therein.
Theoretical results for estimation procedures in non parametric HMMs have also been obtained only very recently: \citet{DuCo12}
concerns regression models with hidden (markovian) regressors and unknown 
regression functions in Gaussian noise, and \citet{GaRo13} is about translated emission distributions.

In this paper, we obtain posterior consistency results for Bayesian procedures in finite state space non parametric HMMs.
To our knowledge, this is the first result on posterior consistency in such models.
In Section \ref{T}, we prove posterior consistency in terms of the weak topology and the $L_1$norm on
 marginal densities of consecutive observations. 
Our main result
is obtained under assumptions on the emission densities and on the prior 
which are very similar to the ones in the i.i.d. case, see Theorem \ref{th1}.
This result relies on a new control of the Kullback-Leibler divergence for HMMs, see Lemma \ref{KL}.
\textcolor{black}{Yet estimating the distribution of consecutive observations is not the main objective of a practitioner. Classifying the observations according to their corresponding hidden states or estimating the parameters of the model often are the questions of interest, see \citet{YaPaRoHo11}.}
In Section \ref{P} we build upon the recent identifiability result to deduce from Theorem \ref{th1}
posterior consistency for each component 
of the parameters. We obtain in general posterior consistency for the transition matrix of the Markov chain
and for the emission probability distribution in the weak topology, see Theorem \ref{th2}. \textcolor{black}{Stronger results are established in particular cases, see Corollary \ref{th3} and Theorem \ref{FD}.}
Finally, some examples of priors that fulfill the assumptions \textcolor{black}{of Theorems \ref{th1} and \ref{th2}}
are studied in Section \ref{E}. 

Particularly in Section \ref{D} the discrete case is thoroughly studied with a Dirichlet process prior. 
Sufficient   and almost necessary assumptions  to apply Theorem \ref{th1} are given in Proposition \ref{thD}. Moreover in this framework, posterior consistency of the marginal smoothing distributions, used in segmentation or classification, is derived in Theorem \ref{FD}.

All proofs are given in \textcolor{black}{Appendices} \textcolor{black}{\ref{kr} and \ref{op}}.


\section{Settings and main Theorem}
\subsection{Notations}\label{N}

We now precise the model and give some notations.
Recall that finite state space HMMs are stochastic processes $(X_t,Y_t)_{t\in \mathbb{N}}$ 
such that $(X_t)_{t\in \mathbb{N}}$ is a Markov chain taking values in a finite set, and 
conditionally on $(X_t)_{t\in \mathbb{N}}$, the random variables $Y_t$, $t\in \mathbb{N}$, 
are independent. The distribution of $Y_{t}$ depending only on $X_{t}$ is called the emission distribution. 
The number $k$ of hidden states is known, so that the state space of the Markov chain is set
to $\{1, \dots, k\}$. Throughout the paper, for any integer $n$, an $n$-uple $(x_{1},\ldots,x_{n})$ is denoted $x_{1:n}$.

Let
$\Delta_k = \{(x_1,\dots,x_k)~ : ~ x_i\geq 0 , ~  i=1,\dots k  ~ ; ~ \sum_{i=1}^k x_i=1\}$ denote
the $k-1$-dimensional simplex. Let $Q$ denote the $k\times k$ transition matrix of the Markov chain, 
so that identifying $Q$ as the $k$-uple of transition distributions (the lines of the matrix), 
we write $Q\in \Delta_k^k$. We denote $\mu\in\Delta_k$  the initial probability measure, 
that is the distribution of $X_1$.
For $\underline{q}\geq 0$, we also define
$$
\Delta^k(\underline{q})=\{ Q \in \Delta_k^k ~ : ~ \min_{i,j\leq k } Q_{i,j} \geq \underline{q}\},
$$
so that $\Delta^k(0)=\Delta_k^k $.
We now recall some properties of Markov chains with transition matrix in $\Delta^k(\underline{q})$.
Note that $\underline{q}$ needs to be less than $\frac{1}{k}$ for $\Delta^k(\underline{q})$ to be non empty.
Then for all $Q$ in $\Delta^k(\underline{q})$, $\max_{i,j}Q_{i,j} \leq 1-(k-1)\underline{q}$.
Also, if $Q\in\Delta^k(\underline{q})$, then for any $i \in \{1, \dots,k\}$ and $A \subset \{1, \dots,k\}$,
 $\sum_{j\in A}Q_{i,j}\geq k \underline{q} u(A) $,
 with $u$ the uniform probability on $\{1, \dots,k\}$.
 Besides if $Q \in \Delta^k(\underline{q})$ with $\underline{q}>0$,
 the chain is irreducible, positive recurrent and admits a unique stationary probability measure
 denoted $\mu^Q$ for which $\underline{q} \leq \mu^Q(i) \leq 1- (k-1) \underline{q}$, $1\leq i \leq k$.

We assume that the observation space is $\mathbb{R}^{d}$ endowed with its Borel sigma field.
Let $\mathcal{F}$ be the set of probability density functions with respect to a reference measure
$\lambda$ on $\mathbb{R}^{d}$. $\mathcal{F}^k$ is the set of possible emission densities, 
that is for $f=(f_{1},\ldots,f_{k})\in\mathcal{F}^k$,
the distribution of $Y_{t}$ conditionally to $X_{t}=i$ will be $f_{i}\lambda$, $i=1,\ldots,k$. 
See Figure \ref{graph} for a visualization of the model.
\begin{figure}[ht]
	\centerline{\includegraphics[width=10cm]{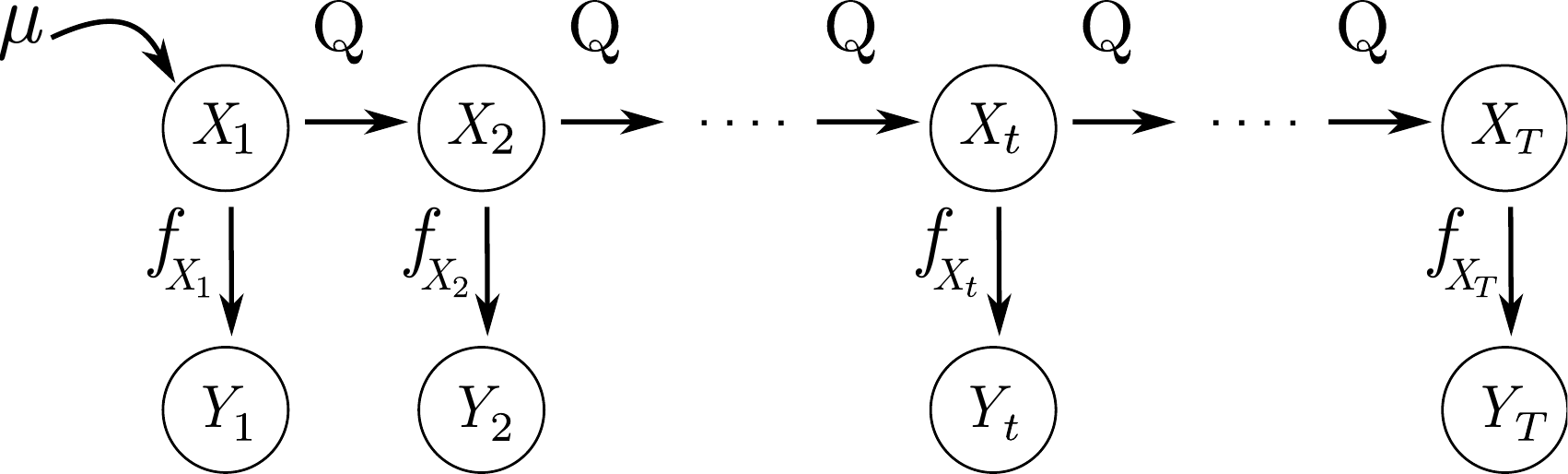}}
	\caption{The model}
	\label{graph}
\end{figure}

Let 
$$\Theta=\{ \theta = (Q,f) ~ :  ~ Q \in \Delta_k^k, f \in \mathcal{F}^k\}
$$ 
and
$$\Theta(\underline{q})=\{ \theta = (Q,f) ~ :  ~ Q \in \Delta^k(\underline{q}), f \in \mathcal{F}^k\}.
$$

Then $\mathbb{P}^\theta$ (resp. $\mathbb{P}^{\theta,\mu}$) denotes the probability distribution of
$(X_t,Y_t)_{t\in \mathbb{N}}$ under $\theta$ 
 and initial probability $\mu^\theta:=\mu^Q$ (respectively $\mu$). 
Let $p^\theta_l$ ($p^{\theta,\mu}_l$ resp.) denote the probability density of $Y_1, \dots, Y_l$ 
 with respect to $\lambda^{\otimes l}$ under $\mathbb{P}^\theta$ (resp. $\mathbb{P}^{\theta,\mu}$).
and $P_l^\theta$ ($P^{\theta,\mu}_l$ resp.) the marginal distribution of $Y_1, \dots, Y_l$ under $\mathbb{P}^\theta$ (resp. $\mathbb{P}^{\theta,\mu}$).
 So for any $\theta \in \Theta$, initial probability $\mu$, and 
 measurable set $A$ of $\{1,\dots,k\}^l\times (\mathbb{R}^d)^l  $:
 \begin{equation*}
\begin{split}
  \mathbb{P}^{\theta,\mu} &((X_{1:l},Y_{1:l}) \in A) \\
 &=\int \sum_{x_1, \dots, x_l=1}^k \mathds{1}_{(x_1, \dots,x_l,y_1,\dots,y_l)\in A} ~ \mu_{x_1} Q_{x_1,x_2} \dots
 Q_{x_{l-1},x_l} \\
  &\qquad f_{x_1}(y_1) \dots f_{x_l}(y_l) \lambda(dy_1)\dots\lambda(dy_l) ,
 \end{split}
 \end{equation*}
\begin{equation*}
\begin{split}
 & p^{\theta,\mu}_l  (y_1, \dots,y_l) = 
 \sum_{x_1, \dots, x_l=1}^k 
 \mu_{x_1} Q_{x_1,x_2} \dots Q_{x_{l-1},x_l} f_{x_1}(y_1) \dots f_{x_l}(y_l), 
 \end{split}
 \end{equation*}
 and
 $\displaystyle{P_l^{\theta, \mu}= p_l^{\theta, \mu} \lambda^{\otimes l}.}$

We denote by $\delta_\mu \otimes \pi$ the prior on $\Delta_k \times \Theta$,
where $\mu \in \Delta_k$ is an initial probability measure.
We assume that $\pi$ is a product of probability measures on $\Theta$,
$\pi= \pi_Q \otimes \pi_f$ such that
$\pi_Q$ is a probability distribution on $\Delta_k^k$ and
$\pi_f$ is a probability distribution on $\mathcal{F}^k$. 

We assume throughout the paper that the observations are distributed from $\mathbb{P}^{\theta^*}$
so that their distribution 
is a stationary HMM.
We are interested in posterior consistency, that is to prove that
with $\mathbb{P}^{\theta^*}$-probability one, for all neighborhood $U$ of $\theta^*$ : 
$$\lim_{n \to +\infty} \pi(U| Y_{1:n})= 1 .$$

The choice of a topology on the parameters arises here.
For any distance or pseudometric $D$, we denote $N(\delta,A,D)$ the $\delta$-covering number of the set $A$ with respect to $D$, 
that is the minimum number $N$ of elements $a_1, \dots, a_N$ such that
for all $a \in A$, there exists $n \leq N$ such that $D(a,a_n)\leq \delta$.

For $k \times k$ matrices $M$, we use 
$$\lVert M \rVert= \max_{1 \leq i,j \leq k} \lvert M_{i,j} \rvert.$$
For vectors $v$ in $\mathbb{R}^k$, we denote 
$$\lVert v \rVert_1 = \sum_{1\leq i \leq k} \lvert v_i \rvert.$$
For probabilities $P_1$ and $P_2$, let $p_1$ and $p_2$ be their respective 
densities with respect to some dominated measure $\nu$.
We use the total variation norm :
$$\lVert P_1-P_2 \rVert_{TV}= \frac{1}{2} \int \lvert p_1-p_2 \rvert d\nu = \frac{1}{2} \lVert p_1-p_2 \rVert_{L_1(\nu)}$$
and the Kullback-Leibler divergence :
\begin{equation*} 
 \begin{split}  
  KL(P_1,P_2) & =\left\{
    \begin{array}{ll}
      \int p_1 \log(\frac{p_1}{p_2}) d\nu & \text{ if } P_1 << P_2, \\
          + \infty                  & \text{ otherwise.}
    \end{array}
    \right.\\
     \end{split}
\end{equation*}
We  also denote  $KL(p_1,p_2)$ for $KL(p_1\nu,p_2\nu)$.
On $\mathcal{F}^k$ we use the distance $d(\cdot,\cdot)$ defined for all $g=(g_1, \dots,g_k)$, 
$\tilde{g}=(\tilde{g_1}, \dots,\tilde{g_k}) $ by
$$d(g,\tilde{g}) = \max_{1 \leq j \leq k} \lVert g_j - \tilde{g_j} \rVert_{L_1(\lambda)} $$

On $\Theta(\underline{q})$, we use the following pseudometric for $l\geq3$, $l \in \mathbb{N}$,
$$D_l(\theta, \theta')=
\int | p_l^\theta (y_1, \dots, y_l) - p_l^{\theta'} (y_1, \dots, y_l)|\lambda(dy_1)\dots\lambda(dy_l)
= \lVert p_l^\theta - p_l^{\theta'} \rVert_{L_1(\lambda^{\otimes l})}.$$
Then a $D_l$-neighborhood of $\theta$ is a set which contains a set 
$\{ \theta' ~ : ~ D_l(\theta,\theta')<\epsilon\}$ for some $\epsilon>0$.
We also use the weak topology on marginal distributions $(P_l^\theta)_\theta$.
We recall that in any neighborhood of $P_l^\theta $ in the weak topology on probability measures
 there is a subset which is a union of sets of the form
 $$\left\{P ~ : ~ \left\lvert \int h_j dP - \int h_j p_l^\theta d\lambda^{\otimes l}  \right\rvert < \epsilon_j,  ~
  j=1,\dots,N \right\} ,$$
 where for all $1 \leq j \leq N$, $\epsilon_j>0$ and  $h_j$ is in the set $\mathcal{C}_b((\mathbb{R}^d)^l)$
 of all bounded continuous functions 
 from $(\mathbb{R}^d)^l$ to $\mathbb{R}$.
We prove posterior consistency in this general nonparametric context using this weak topology on marginal distributions $(P_l^\theta)_\theta$
and the $D_l$-pseudometric in Section \ref{T}.
We study the posterior consistency for the transition matrix and the emission probabilities separately
in Section \ref{P}.


Finally the sign $\lesssim$ is used for inequalities up to a multiplicative constant possibly depending on fixed parameters.


\subsection{Main Theorem}\label{T}

In this section we state our general theorem on posterior consistency 
for nonparametric hidden Markov models in the weak topology on marginal distributions $(P_l^\theta)_\theta$ and the $D_l$-topology.
We consider the following assumptions. Fix $l\geq 3$.
\vspace{5mm}

(A1) For all $\epsilon>0$ small enough there exists a set
 $\Theta_\epsilon \subset \Theta(\underline{q})$ such that $\pi(\Theta_\epsilon)>0$
  and for all $\theta=(Q,f) \in \Theta_\epsilon$,
 
 \quad (A1a)  $\displaystyle{\lVert Q - Q^* \rVert < \epsilon}$,
 
 \quad (A1b) $\displaystyle{\max_{1\leq i\leq k} \int f_i^*(y) \max_{1\leq j \leq k} \log \left(\frac{ f_j^*(y)}{f_j(y)} \right) \lambda(dy) < \epsilon}, $
 
 \quad (A1c) for all $y \in \mathbb{R}^d$ such that  $\displaystyle{\sum_{i=1}^k f^*_i(y)>0}$, $\displaystyle{  \sum_{j=1}^k f_j(y) >0}$,
 
 \quad (A1d) 
 $\displaystyle{\sup_{y  ~ : ~ \sum_{i=1}^k f^*_i(y)>0} \max_{1 \leq j \leq k} f_j(y) < +\infty}$
 
 \quad (A1e) $\displaystyle{\sum_{i=1}^k   \int f^*_i(y) \left\lvert 
 \log \left(  \sum_{j=1}^k f_j(y)\right) \right\rvert \lambda(dy) < + \infty}$
\vspace{5mm} 
 
 (A2) For all $n>0$, for all $\delta>0$
there exists a set $\mathcal{F}_n \subset \mathcal{F}^k$ and a real number $r_1>0$ such that 
$\pi_f\big(({\mathcal{F}_n})^c\big)\lesssim e^{-nr_1}$ 
and such that 
$$ \sum_{n>0}  N\left(\frac{\delta}{36l}, \mathcal{F}_n , d(\cdot,\cdot)\right)
\exp\left(-\frac{n \delta^2 k^2 \underline{q}^2}{32 l}\right)< + \infty.$$
\vspace{5mm}

\begin{theorem}\label{th1} Let $\underline{q}>0$.
Assume that the support of the prior $\pi$ is included in $\Theta(\underline{q})$
 and that for all $1\leq i\leq k$, $\mu_i \geq \underline{q}$.
 
 \begin{enumerate}[label=\alph*)]
 \item \label{a.} If Assumption (A1) holds then
 for all weak neighborhood $U$ of $P_l^{\theta^*}$, 
$$\mathbb{P}^{\theta^*}\left(\lim_{n \to \infty} \pi(U|Y_{1:n})=1 \right)=1. $$
 
\item \label{b.} Moreover if Assumptions (A1) and (A2) hold then, for all $\epsilon>0$, 
$$\mathbb{P}^{\theta^*}\left(\lim_{n \to \infty} \pi(~ \left\{ \theta: ~ D_l(\theta,\theta^*)<\epsilon  \right\} ~ |Y_{1:n})=1 \right)=1. $$
\end{enumerate}
\end{theorem}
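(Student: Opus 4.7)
The overall strategy is a Schwartz-type argument adapted to stationary HMMs: write the posterior as
$$
\pi(U^c\mid Y_{1:n}) = \frac{\int_{U^c} p_n^{\theta,\mu}(Y_{1:n})/p_n^{\theta^*}(Y_{1:n})\,\pi(d\theta)}{\int p_n^{\theta,\mu}(Y_{1:n})/p_n^{\theta^*}(Y_{1:n})\,\pi(d\theta)}
$$
and bound numerator and denominator separately. For the denominator I would first check, using Assumption (A1) and the HMM Kullback--Leibler bound (Lemma \ref{KL}), that on $\Theta_\epsilon$ one has $KL(P_n^{\theta^*},P_n^{\theta,\mu})\lesssim n\epsilon$ and a comparable bound on the corresponding variance, uniformly in $\theta\in\Theta_\epsilon$. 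A standard Jensen/Fubini argument then shows that for every $\beta>0$,
$$
\mathbb{P}^{\theta^*}\!\Bigl(\liminf_{n} e^{n\beta}\!\int \frac{p_n^{\theta,\mu}}{p_n^{\theta^*}}(Y_{1:n})\,\pi(d\theta) > 0\Bigr)=1,
$$
where the uniform ergodicity of the hidden chain (granted by $Q,Q^*\in\Delta^k(\underline{q})$) is used to turn the KL-type bound into an almost-sure lower bound via Birkhoff/ergodic arguments.

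For part \ref{a.}, a generic weak neighborhood of $P_l^{\theta^*}$ contains a basic set $\{\theta:|\int h_j\,dP_l^\theta-\int h_j\,dP_l^{\theta^*}|<\eta_j,\ j=1,\dots,N\}$ with $h_j\in\mathcal{C}_b((\mathbb{R}^d)^l)$. I would build the tests
$$
\phi_{j,n}=\mathds{1}\Bigl\{\Bigl|\tfrac{1}{n-l+1}\sum_{t=1}^{n-l+1}h_j(Y_{t:t+l-1})-\int h_j\,dP_l^{\theta^*}\Bigr|\ge \eta_j/2\Bigr\},
$$
and set $\phi_n=\max_j\phi_{j,n}$. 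Under $\mathbb{P}^{\theta^*}$, $\mathbb{E}^{\theta^*}\phi_n\to 0$ (in fact exponentially fast, by a Hoeffding inequality for uniformly ergodic chains, using the Doeblin minorization coming from $\underline{q}>0$). For any $\theta\in U^c\cap\Theta(\underline{q})$, some $|\int h_j dP_l^\theta-\int h_j dP_l^{\theta^*}|\ge \eta_j$, and the same Hoeffding-type bound gives $\mathbb{E}^{\theta,\mu}(1-\phi_n)\le e^{-cn}$ uniformly. Combined with the denominator bound, this yields $\pi(U\mid Y_{1:n})\to 1$ almost surely.

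For part \ref{b.}, I would use the sieve $\Theta_n=\Delta^k(\underline{q})\times\mathcal{F}_n$ so that $\pi(\Theta_n^c)\lesssim e^{-nr_1}$, which together with the denominator bound makes $\pi(\Theta_n^c\mid Y_{1:n})\to 0$. On $\Theta_n$ I would cover $\mathcal{F}_n$ by $N(\delta/(36l),\mathcal{F}_n,d)$ balls and cover $\Delta_k^k$ by finitely many $\|\cdot\|$-balls, and use the elementary continuity estimate
$$
D_l(\theta,\theta')\le l\,d(f,f')+C_l\,\|Q-Q'\|,
$$
obtained by telescoping the definition of $p_l^\theta$, to translate these into a $D_l$-cover of $\Theta_n$ at radius $\sim \delta/k$. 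For each ball $B$ with centre at $D_l$-distance $\ge \epsilon$ from $\theta^*$, a Le~Cam/Birgé-type test based on the sign of $p_l^{\theta_B}-p_l^{\theta^*}$ averaged over blocks of length $l$ gives errors $\exp(-cn\delta^2 k^2\underline{q}^2/l)$, where the block structure again relies on uniform minorization. Summing these errors over the cover and using Assumption (A2) yields a summable bound, hence $\pi(\{D_l(\theta,\theta^*)\ge\epsilon\}\cap\Theta_n\mid Y_{1:n})\to 0$ almost surely.

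The hard part is making the testing step quantitative enough to mesh with (A2): the exponential decay rate of the test errors must be aligned with the entropy rate, and both must absorb the dependence structure of $Y_{1:n}$. This is where the form of (A2), with the explicit $k^2\underline{q}^2/l$ factor, comes from: it is dictated by the block length (of order $l$) and the mixing constant (of order $k\underline{q}$) produced by the Doeblin minorization of $Q$. A secondary difficulty is that Lemma \ref{KL} must be applied not only to the stationary HMM under $\theta^*$ but also to the HMM started from a fixed $\mu$ satisfying $\mu_i\ge\underline{q}$, so the proof must confirm that both the mean and the variance of $\log(p_n^{\theta,\mu}/p_n^{\theta^*})$ grow linearly in $n$ with small constants on $\Theta_\epsilon$.
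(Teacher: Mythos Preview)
Your overall Schwartz/Barron architecture matches the paper's: bound the denominator via a Kullback--Leibler argument, and bound the numerator by constructing exponentially consistent tests, using the sieve $\Delta^k(\underline{q})\times\mathcal F_n$ for part \ref{b.}. The test constructions you describe (block averages of $h$ for the weak topology, Le~Cam/Birg\'e-type tests on a $D_l$-cover for the strong topology, with error rates driven by a Hoeffding inequality for uniformly ergodic chains) are essentially what the paper does; the paper uses non-overlapping blocks of length $l$ and Rio's inequality, and for part \ref{b.} simply invokes the tests already built in \citet{GaRo12}, but the content is the same.

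The one genuine gap is your treatment of the denominator. You propose to control both the mean and the \emph{variance} of $\log(p_n^{\theta,\mu}/p_n^{\theta^*})$ uniformly on $\Theta_\epsilon$, and then use a Jensen/Fubini or Chebyshev-type argument. But Assumption (A1) is not designed to yield any variance bound, and you do not indicate where one would come from. The paper sidesteps this entirely: conditions (A1c)--(A1e) are exactly the hypotheses of Proposition 1 in \citet{Do04}, which gives directly that $n^{-1}\log\bigl(p_n^{\theta^*}(Y_{1:n})/p_n^{\theta,\mu}(Y_{1:n})\bigr)$ converges $\mathbb P^{\theta^*}$-a.s.\ and in $L^1$ to a deterministic limit $\bar L(\theta)$. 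Lemma \ref{KL} then shows $\bar L(\theta)\le\epsilon$ on $\Theta_{\tilde\epsilon}$, so $\pi(\{\bar L(\theta)<\epsilon\})>0$ for every $\epsilon>0$, and Lemma 10 of \citet{Ba88} gives the merging property you wrote down. In other words, the role of (A1c)--(A1e) is not to feed a second-moment estimate but to guarantee almost-sure convergence of the normalized log-likelihood ratio for each fixed $\theta$; once you have that, no uniform variance control is needed. If you wish to keep your variance-based route you would have to supply an independent argument, which the present assumptions do not obviously support.
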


\begin{remark}
We assume everywhere in the paper  that the support of the prior is included in $\Theta(\underline{q})$. It means the results of this paper can only be applied to priors $\pi_Q$ on transition matrices which vanish close to the border of $\Delta_k^k$. 
This assumption is satisfied by a product of truncated Dirichlet distribution i.e. if
the lines $Q_{i,\cdot}$ of $Q$ are independently distributed from a law proportional to:
$$ Q_{i,1}^{\alpha_1-1} \dots Q_{i,k}^{\alpha_k-1} \mathds{1}_{\left\{\underline{q} \leq Q_{i,j} \leq  1, ~ \forall 1\leq j \leq k\right\}}  dQ_{i,1} \dots dQ_{i,k}$$
where $\alpha_1, \dots, \alpha_k >0$.

\textcolor{black}{The restriction on $\Theta(\underline{q})$ comes from the test built in \citet{GaRo12}. On this set, HMMs are geometrically ergodic.  It is a common assumption in the literature see \citet{DoMa01}, \citet{Do04} or \citet{DoMoOlHa11} for instance. Besides \citet{GaRo12} explain the difficulty which appears when the Markov chain does not mix well. They are also able to obtain a less restrictive assumption on the support of the prior on transition matrices. In return they assume a more restrictive assumption on the log-likelihood, compare  Equations \eqref{kleq} and \eqref{klimeq} with their  Assumption C1 . }
\end{remark}

In the case of  density estimation with i.i.d. observations it is usual  to control the Kullback-Leibler support
of the prior to show weak posterior consistency and to control in addition a metric entropy to obtain strong consistency see Chapter 4 of \citet{GhRa03}.
Assumptions (A1) and (A2) are similar in spirit.
Assumption (A1) replaces the assumption on the true density function being in the Kullback-Leibler support
of the prior in the i.i.d. case. 
(A1a) ensures that the transition matrices of $\Theta_\epsilon$ are in a ball of radius $\epsilon$
around the true transition matrix.
Under (A1b) the emission densities are in an $\epsilon$ Kullback-Leibler ball around the true one.
(A1c), (A1d) and (A1e) are assumptions under which the log-likelihood converges 
$\mathbb{P}^{\theta^*}\text{\!\!\! -a.s.}$ and in $L_1(\mathbb{P}^{\theta^*})$.
(A2) is very similar to the assumptions of the metric entropy of Theorem 4.4.4 in \citet{GhRa03}.


In Appendix \ref{Prth1}, the proof  of Theorem \ref{th1} relies on the method of \citet{Ba88}. It consists in controlling Kullback-Leibler neighborhoods  and building tests. \textcolor{black}{The construction of tests is quite straightforward thanks to Rio's inequality \cite{Ri00} which generalizes Hoeffding's inequality. To prove \ref{a.}, we use the usual strategy presented in Section 4.4.1 in \citet{GhRa03} together with Rio's inequality \cite{Ri00} and \citet{GaRo13}. To prove \ref{b.} we use the tests of \citet{GaRo13}. To control the Kullback-Leibler neighborhoods, we use the following lemma whose proof is given  in Appendix \ref{AKL}}.

\begin{lemma}\label{KL} Let $\theta^*$ be in $\Theta(\underline{q})$.
 If (A1) holds  then for all $0<\epsilon<1$, there exists $N \in \mathbb{N}$ such that for all $n\geq N$ and for all $\theta \in \Theta_\epsilon$:
 $$\frac{1}{n} KL(\mathbb{P}^{\theta^*}_n,\mathbb{P}^{\theta,\mu}_n) \leq  
 \frac{3}{\underline{q}} ~ \epsilon .$$
\end{lemma}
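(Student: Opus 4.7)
\medskip\noindent\emph{Proof plan.}
The plan is to reduce the marginal KL divergence over $Y_{1:n}$ to the full joint KL over $(X_{1:n},Y_{1:n})$ by data processing, split that joint KL into three standard summands using the HMM chain rule, and then bound each summand uniformly in $\theta\in\Theta_\epsilon$ via (A1a) and (A1b). The HMM joint density is completely explicit, so once marginalization is out of the way the problem reduces to Markov chain and i.i.d.~KL computations; the lower bound $\underline{q}$ on transitions and on the initial measure is what keeps all the logarithms finite.

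Concretely, I would first use that $Y_{1:n}$ is a measurable function of $(X_{1:n},Y_{1:n})$, so by data processing
\begin{equation*}
KL(\mathbb{P}^{\theta^*}_n,\mathbb{P}^{\theta,\mu}_n)\leq KL\bigl(\mathbb{P}^{\theta^*}(X_{1:n},Y_{1:n}),\,\mathbb{P}^{\theta,\mu}(X_{1:n},Y_{1:n})\bigr).
\end{equation*}
Expanding the log-ratio of joint densities via the HMM factorization $\mu_{x_1}\prod_{t=2}^n Q_{x_{t-1},x_t}\prod_{t=1}^n f_{x_t}(y_t)$, integrating against $\mathbb{P}^{\theta^*}$, and invoking stationarity of the true chain ($X_1\sim\mu^*$; $(X_{t-1},X_t)\sim\mu^*(i)Q^*_{ij}$; $Y_t\mid X_t=i\sim f^*_i$) yields the decomposition
\begin{equation*}
KL(\mu^*,\mu)\;+\;(n-1)\sum_{i=1}^k\mu^*(i)\,KL(Q^*_{i,\cdot},Q_{i,\cdot})\;+\;n\sum_{i=1}^k\mu^*(i)\,KL(f^*_i,f_i).
\end{equation*}

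I would then bound each piece. The initial piece satisfies $KL(\mu^*,\mu)\le\log(1/\underline{q})$, an $n$-independent constant, because $\mu^*_i\le 1$ and $\mu_i\ge\underline{q}$. For the transition piece, (A1a) gives $|Q^*_{ij}-Q_{ij}|<\epsilon$ and $Q_{ij}\ge\underline{q}$, so the chi-squared upper bound $KL\le\chi^2$ combined with $\sum_j|Q^*_{ij}-Q_{ij}|\le 2$ yields
\begin{equation*}
KL(Q^*_{i,\cdot},Q_{i,\cdot})\le\sum_j\frac{(Q^*_{ij}-Q_{ij})^2}{Q_{ij}}\le\frac{\epsilon}{\underline{q}}\sum_j|Q^*_{ij}-Q_{ij}|\le\frac{2\epsilon}{\underline{q}}.
\end{equation*}
For the emission piece, $\log(f^*_i(y)/f_i(y))\le\max_j\log(f^*_j(y)/f_j(y))$ pointwise, so integrating against $f^*_i$ and using (A1b) gives $KL(f^*_i,f_i)<\epsilon$ for every $i$.

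Combining and dividing by $n$,
\begin{equation*}
\tfrac{1}{n}KL(\mathbb{P}^{\theta^*}_n,\mathbb{P}^{\theta,\mu}_n)\le\frac{\log(1/\underline{q})}{n}+\frac{2\epsilon}{\underline{q}}+\epsilon,
\end{equation*}
and since $\underline{q}\le 1/k<1$ we have $\epsilon\le\epsilon/\underline{q}$, so $2\epsilon/\underline{q}+\epsilon\le 3\epsilon/\underline{q}$. The leftover $\log(1/\underline{q})/n$ is absorbed into the slack $\epsilon(1-\underline{q})/\underline{q}>0$ by taking $N\ge\underline{q}\log(1/\underline{q})/(\epsilon(1-\underline{q}))$. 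No step is a serious obstacle; the main care is in the stationarity bookkeeping (which makes the transition and emission expectations constant in $t$) and in the chi-squared manipulation that delivers the correct $\epsilon/\underline{q}$ rate with a clean small constant. Note that (A1c)--(A1e) are not used for this lemma: they enter elsewhere, to ensure almost-sure and $L^1$ convergence of the log-likelihood in the proof of Theorem~\ref{th1} itself.
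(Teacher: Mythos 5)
Your proof is correct, but it follows a genuinely different route from the paper. The paper never passes to the joint law of $(X_{1:n},Y_{1:n})$: it writes $p_n^{\theta^*}/p_n^{\theta,\mu}$ as a convex combination (with weights proportional to the $\theta$-path weights) of the pathwise ratios $\mu^*_{i_1}Q^*_{i_1,i_2}\cdots f^*_{i_n}(Y_n)\big/\mu_{i_1}Q_{i_1,i_2}\cdots f_{i_n}(Y_n)$, bounds it by the maximum over hidden paths, splits that maximum factorwise, and then uses $\log(a/b)\le (a-b)/b\le |a-b|/\underline{q}$ for the initial and transition factors together with stationarity of $Y_t$ for the emission factor; this gives $\frac{1}{n\underline{q}}\max_i|\mu_i-\mu^*_i|+\frac{n-1}{n\underline{q}}\max_{i,j}|Q_{i,j}-Q^*_{i,j}|+\max_i\int f^*_i\max_j\log(f^*_j/f_j)\,d\lambda$, hence $\le \epsilon/(n\underline q)+\epsilon/\underline q+\epsilon\le 3\epsilon/\underline q$ for $n$ large. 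You instead invoke the data-processing inequality to dominate the marginal KL by the joint KL, which tensorizes exactly under stationarity into $KL(\mu^*,\mu)+(n-1)\sum_i\mu^*_i KL(Q^*_{i,\cdot},Q_{i,\cdot})+n\sum_i\mu^*_i KL(f^*_i,f_i)$, and then use $KL\le\chi^2$ with $Q_{i,j}\ge\underline q$ and the pointwise domination $\log(f^*_i/f_i)\le\max_j\log(f^*_j/f_j)$ for (A1b). Each step checks out: the chain-rule decomposition, the $2\epsilon/\underline q$ row bound, the $\log(1/\underline q)$ bound on the initial term, and the absorption of the $O(1/n)$ remainder into the slack $\epsilon(1-\underline q)/\underline q$ (positive since $\underline q<1/k<1$) all hold, and you are right that (A1c)--(A1e) are not needed here. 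What each approach buys: yours is the more standard information-theoretic argument, with a transparent decomposition into component KLs and slightly less bookkeeping inside the expectation; the paper's max-over-paths bound is elementary and self-contained (no appeal to data processing or chi-square), keeps everything at the level of the observed likelihood, and gives the marginally sharper per-row constant $\epsilon/\underline q$ instead of your $2\epsilon/\underline q$ --- both comfortably within the stated $3\epsilon/\underline q$.
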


\subsection{Consistency of each component of the parameter}\label{P}

In this Section we look at the consequences of Theorem \ref{th1} on posterior consistency for the transition matrix and
the emission probabilities separately. Estimating consistently the components of the parameter is of great importance. First  one may want to know the proportion of each population or the probability of moving from one population to another, i.e. the transition matrix. Secondly, these components are important to recover the smoothing distribution and then clustering the observations, see \citet{CaMoRy05} and Theorem \ref{FD}. 

The consistency of each component, i.e. the transition matrix and the emission distributions does not directly result from consistency of the marginal distribution of the observations, see \citet{DuCo12}. Obviously, identifiability seems to be necessary to obtain this implication yet it is not sufficient. We obtain posterior consistency for the components of the parameter thanks to the result of identifiability of \citet{GaClRo13}, an inequality linking the $D_l$ pseudometric to distances on each component of the parameter and an argument of compactness.

We use a product topology on the set of parameters.
 In particular we study consistency in the topology associated with the sup norm on transition matrices $\lVert \cdot \rVert$
 and the weak topology on probabilities for the emission probabilities up to label switching.
 To deal with label switching, we need the following definitions.
 Let $\mathcal{S}_k$ denote the symmetric group on $\{1, \dots, k\}$. Let $\sigma$ be a permutation  in $\mathcal{S}_k$, 
 for all matrices $Q \in \Delta^k_k$, we denote
 $\sigma Q$  the following matrix : for all $1 \leq i,j \leq k$,
 $$(\sigma Q)_{i,j}= Q_{\sigma(i),\sigma(j)}.$$If $(X_t,Y_t)_{t \in \mathbb{N}}$ is distributed from $P^{(Q,f)}$ and $\tilde{X}_t=\sigma^{-1}(X_t)$, for $\sigma \in \mathcal{S}_k$, then $(\tilde{X}_t,Y_t)_{t \in \mathbb{N}}$ is distributed from $P^{(\sigma Q, (f_{\sigma(1)}, \dots, f_{\sigma(k)}))}$, i.e the labels of the Markov chain have been switched.
Under the assumptions of Theorem \ref{th1} and of identifiability we prove that the posterior concentrates around $(Q^*,f^*)$ up to label switching, i.e. around $\{\sigma Q^* ,(f^*_{\sigma(1)}, \dots, f^*_{\sigma(k)})\}_{\sigma \in \mathcal{S}_k}$, in Theorem \ref{th2} whose proof is given in Appendix \ref{P:th2}. In other words we obtain posterior consistency considering neighborhoods of the form 
$$\left\{ \exists \sigma \in \mathcal{S}_k; ~ \sigma Q \in U_{Q^*}, ~ f_{\sigma(i)} \in U_{f^*_i},  i=1\dots k \right\} $$
where $U_{Q^*}$ is a neighborhood of $Q^*$ and for all $1 \leq i  \leq k$, $U_{f^*_i}$ is a weak neighborhood of $f^*_i \lambda$.
That is to say we consider the product of the sup norm topology on transition matrices and of the weak topology on the emission distributions up to label switching.

\begin{theorem}\label{th2}
Let $\theta^*=(Q^*,f^*)$. Suppose $f^*_1 \lambda, \dots,f^*_k \lambda$ are linearly independent 
and $Q^*$ has full rank.
 Let $\underline{q}>0$,
 assume that  $\mu_i\geq\underline{q}$, that the support of the prior $\pi$ is included in $\Theta(\underline{q})$ and that (A1) and (A2) hold.

 Then for all weak neighborhood $U_{f^*_i}$ of $f^*_i \lambda$, for all $1\leq i \leq k$
and for all neighborhood $U_{Q^*}$ of $Q^*$,

\begin{equation}\label{sp}
\mathbb{P}^{\theta^*}\left(\lim_{n \to +\infty} \pi\bigg(
\left\{ \exists \sigma \in \mathcal{S}_k; ~ \sigma Q \in U_{Q^*}, ~ f_{\sigma(i)} \in U_{f^*_i},  ~  i=1\dots k \right\}   \bigg|  ~ Y_{1:n}\bigg)
= 1 \right)=1. 
\end{equation}
\end{theorem}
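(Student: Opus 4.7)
The plan is to deduce Theorem \ref{th2} from Theorem \ref{th1}\ref{b.} using the identifiability theorem of \citet{GaClRo13} together with a compactness-plus-contradiction argument. Writing
$$V = \left\{(Q,f) \in \Theta(\underline{q}) : \exists \sigma \in \mathcal{S}_k,\ \sigma Q \in U_{Q^*},\ f_{\sigma(i)} \in U_{f^*_i},\ i=1,\dots,k\right\}$$
for the product neighborhood of the orbit $\{\sigma\theta^*\}_{\sigma \in \mathcal{S}_k}$, it is enough to produce some $\epsilon > 0$ such that $\{\theta \in \Theta(\underline{q}) : D_l(\theta, \theta^*) < \epsilon\} \subset V$: the conclusion then follows directly from Theorem \ref{th1}\ref{b.}.

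I would establish this containment by contradiction. Assume there is a sequence $\theta_n = (Q_n, f_n) \in \Theta(\underline{q}) \setminus V$ with $D_l(\theta_n, \theta^*) \to 0$. Since $\Delta^k(\underline{q})$ is a compact subset of $\mathbb{R}^{k^2}$, pass to a subsequence along which $Q_n \to Q^\infty \in \Delta^k(\underline{q})$ in the sup norm; the map $Q \mapsto \mu^Q$ is continuous on this set (the stationary distribution is characterized as the unique normalized left $1$-eigenvector, and the chains are uniformly ergodic), so $\mu^{Q_n} \to \mu^{Q^\infty}$. For the emission side, $D_l(\theta_n,\theta^*) \to 0$ forces total variation convergence of the time-$1$ marginal $P_1^{\theta_n} = \sum_i \mu^{Q_n}_i\, f_i^{(n)} \lambda$, and hence its tightness; since $\mu^{Q_n}_i \geq \underline{q}$ uniformly in $n$, each component $f_i^{(n)} \lambda$ is tight as well, and Prokhorov's theorem delivers a further subsequence with $f_i^{(n)} \lambda \rightharpoonup P_i^\infty$ weakly for every $i$, for some probability measures $P_i^\infty$.

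Next I would identify the limit. The representation
$$P_l^{(Q,f)} = \sum_{x_1,\dots,x_l} \mu^Q_{x_1}\, Q_{x_1,x_2}\cdots Q_{x_{l-1},x_l}\, (f_{x_1}\lambda) \otimes \cdots \otimes (f_{x_l}\lambda)$$
depends continuously on $(Q, f\lambda)$ for the sup-norm times product-weak topology on probability measures, so $P_l^{\theta_n}$ converges weakly to the analogous measure $P_l^{(Q^\infty, P^\infty)}$ obtained by replacing each $f_{x_j}\lambda$ by $P_{x_j}^\infty$. Combined with $P_l^{\theta_n} \to P_l^{\theta^*}$ in total variation, this forces $P_l^{(Q^\infty, P^\infty)} = P_l^{\theta^*}$. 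Since $l \geq 3$, $Q^*$ has full rank, and $(f^*_i \lambda)_i$ are linearly independent, the identifiability result of \citet{GaClRo13} applies and yields a permutation $\sigma \in \mathcal{S}_k$ with $Q^\infty = \sigma Q^*$ and $P_i^\infty = f^*_{\sigma(i)} \lambda$. Consequently $\theta_n$ converges in the sup-norm times product-weak topology to $\sigma \theta^*$, so $\theta_n \in V$ for $n$ large, contradicting $\theta_n \notin V$.

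The main obstacle is running this identification at the level of probability measures rather than densities: the weak limits $P_i^\infty$ are not a priori known to be absolutely continuous with respect to $\lambda$, and their absolute continuity is an output, not an input, of the identifiability step. Secondary technical points are the continuity of $Q \mapsto \mu^Q$ on $\Delta^k(\underline{q})$ and of $(Q, f\lambda) \mapsto P_l^{(Q,f)}$ for the relevant topologies; both rely on the uniform lower bound $\underline{q}$ keeping the chains well inside the interior of the simplex.
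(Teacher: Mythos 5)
Your proposal is correct and follows essentially the same route as the paper's own proof: reduce to showing a $D_l$-ball around $\theta^*$ is contained in the product neighborhood of the orbit $\{\sigma\theta^*\}_{\sigma\in\mathcal{S}_k}$, then argue sequentially via compactness of $\Delta^k(\underline{q})$, tightness of the emission components (using the uniform lower bound on the stationary weights) and Prokhorov, identify the weak limit of the $l$-marginals, and invoke the identifiability result of \citet{GaClRo13}. The ``obstacle'' you flag is not one: that identifiability result is stated for emission distributions that are arbitrary probability measures, and the paper applies it exactly at this measure level, obtaining $\bar{F}_{\sigma(i)}=f^*_i\lambda$ (hence absolute continuity) as a conclusion, just as in your argument.
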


\begin{remark}
In particular, Equation \eqref{sp} implies that for all $\epsilon >0$
$$\mathbb{P}^{\theta^*}\left(\lim_{n \to +\infty} \pi\left( ~ 
\bigcup_{\sigma \in \mathcal{S}_k}  \left\{Q :  \lVert Q - \sigma Q^* \rVert < \epsilon \right\} ~  \bigg| ~ Y_{1:n}\right)
= 1 \right)=1 .$$
It means that under the assumptions of Theorem \ref{th2}, the posterior concentrates around $\{\sigma Q^*, \sigma \in \mathcal{S}_k\}$.
Equation \eqref{sp} also implies that for all $N \in \mathbb{N}$, for all $h_i \in \mathcal{C}_b(\mathbb{R}^d)$, for all $\epsilon_i>0,~ 1 \leq i\leq N,$
$$\mathbb{P}^{\theta^*}\Bigg(\lim_{n \to +\infty} \pi\bigg( ~ \bigcup_{\sigma \in \mathcal{S}_k} \left\{P:  \left\lvert \int h_i dP- \int h_i f_{\sigma(j)}^*d\lambda \right\rvert < \epsilon_i \right\} \bigg| ~ Y_{1:n}\bigg) =1 \Bigg)=1.$$
This last result is a weak result which allows to consistently recover smooth functionals of the emission distributions $(f^*_j)_j$. We obtain stronger results in Sections \ref{TEP} and \ref{D}. 
\end{remark}

%
%
%
%


\section{Examples of priors on $f$}\label{E}

In this section we apply Theorems \ref{th1} and \ref{th2}  for different types of priors and emission models.
In Section \ref{MG} we deal with emission probabilities which are independent mixtures of Gaussians.
Translated emission probabilities are studied in Section \ref{TEP}. 
\textcolor{black}{Finally we consider the discrete case with Dirichlet process priors in Section \ref{D}.}

Assumptions (A1) and (A2) are purposely designed to resemble the types of assumptions found in density estimation for i.i.d. observations. This allows us to use existing results on consistency in the case of i.i.d. observations.
This is done in Sections \ref{MG} and \ref{TEP} following \citet{To06}. Contrariwise we develop a new method to deal with \textcolor{black}{the Dirichlet process prior} for the discrete case in Section \ref{D}.


\subsection{Independent mixtures of Gaussians}\label{MG}

We consider the well known location-scale mixture of Gaussian distributions as prior model for each $f_i$,
namely each density under the prior is written as 
\begin{equation}\label{eqMG:normal}
g(y)=\int_{\mathbb{R}\times (0,+ \infty)} \phi_\sigma(y-z)dP(z,\sigma)=: \phi *P
\end{equation}
where $\phi_\sigma$ is the Gaussian density with mean zero and variance $\sigma^2$ and 
$P$ is a probability measure on $\mathbb{R}\times (0,+\infty)$. In this part, $\lambda$ is the Lebesgue measure on $\mathbb{R}$.
Let $\pi_P$ be a probability measure on the set of probability measures on $\mathbb{R}\times (0,+\infty)$.
Denote $\pi_g$ the distribution of $g$ expressed as \eqref{eqMG:normal} when $P \sim \pi_P$.
Then we consider the prior distribution on $f=(f_1, \dots, f_k)$ defined by $\pi_f=\pi_g^{\otimes k}$.
We need the following assumptions to apply Theorem \ref{th1} and \ref{th2}:

\quad (B1) 
 $$\pi_P\left(P~ :~ \int \frac{1}{\sigma} dP(z,\sigma) < \infty \right) =1 ,$$

\quad (B2) for all $1\leq j \leq k$, 
$f^*_j$ is positive, continuous on $\mathbb{R}$ and bounded by $M<\infty$,

\quad (B3) for all  $1\leq i \leq k$,
$$\left\lvert \int_{\mathbb{R}} f^*_i(y) \max_{ 1 \leq j \leq k} \log(f^*_j(y)) \lambda(dy) \right\rvert <\infty$$

\quad (B4) for all  $1\leq i \leq k$, $1\leq j \leq k$,
$$ \int_{\mathbb{R}} f^*_i(y)  \log\left(\frac{f^*_j(y)}{\psi_j(y)}\right) \lambda(dy) <\infty$$
where $\psi_j(y)= \inf_{t\in [y-1,y+1]} f^*_j(t)$.

\quad (B5) for all  $1\leq i \leq k$, there exists $\eta>0$ such that 
$$\int_{\mathbb{R}} \lvert y \rvert^{2(1+\eta)} f^*_i(y) \lambda(dy) <\infty.$$

\quad (B6) for all $\beta>0$, $\kappa>0$, there exist a real number $\beta_0>0$, two increasing 
and positive sequences  $a_n$ and $u_n$ tending to $+\infty$ and a sequence 
 $l_n$ decreasing to $0$ such that 
 $$\pi_P\bigg(P~ :~P((-a_n,a_n] \times (l_n,u_n]) < 1- \kappa \bigg) \leq \exp(-n\beta_0), $$
  $$\text{with }\qquad \frac{a_n}{l_n} \leq n\beta, \qquad \log\left(\frac{u_n}{l_n}\right)  \leq n \beta $$

\begin{proposition}\label{th:MG}
Let $\underline{q}>0$.
Assume  that the support of the prior $\pi$ is included in $\Theta(\underline{q})$
 and that for all $1\leq i\leq k$, $\mu_i\geq\underline{q}$. Assume that $Q^*$ is in the support of $\pi_Q$ 
and that the weak support of $\pi_P$ contains all probability measures that are compactly supported.

Then
\begin{itemize}
\item  (B1), (B2), (B3), (B4), (B5) imply (A1)
\item and (B6) implies (A2).
\end{itemize}

%
%
\end{proposition}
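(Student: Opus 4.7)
The plan is to transplant the Tokdar (2006) programme for Dirichlet-process mixtures of Gaussians in the i.i.d. density-estimation setting to the product structure $\pi_f = \pi_g^{\otimes k}$. Because the $k$ emission densities are a priori independent, the KL approximation and entropy estimates for a single coordinate (classical material) essentially transfer to vector form; what needs real attention are the ``mixed'' integrals $\int f^*_i \log(f^*_j/f_j)$ with $i \neq j$ that appear in (A1b) and (A1e) but do not occur in the i.i.d. problem.

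\textbf{Verification of (A1).} I would take $\Theta_\epsilon = U_\epsilon^Q \times \mathcal{G}_\epsilon^f$, with $U_\epsilon^Q = \{Q : \lVert Q - Q^*\rVert < \epsilon\} \cap \Delta^k(\underline q)$ and $\mathcal{G}_\epsilon^f = \prod_{j=1}^k \mathcal{G}_{j,\epsilon}$. Membership of $Q^*$ in the support of $\pi_Q$ (restricted to $\Delta^k(\underline q)$) gives $\pi_Q(U_\epsilon^Q) > 0$, hence (A1a). For each coordinate $j$, hypotheses (B2)--(B5) together with the fact that the weak support of $\pi_P$ contains every compactly supported probability are precisely the ingredients of Tokdar's KL approximation theorem: one can build $\mathcal{G}_{j,\epsilon}$ with positive $\pi_g$-mass whose elements $f_j$ satisfy simultaneously $\int f^*_j \log(f^*_j/f_j)\,d\lambda < \epsilon$ and a pointwise lower bound $f_j \geq c\, \psi_j$ on a large compact set (this lower bound is implicit in Tokdar's construction via uniform control of mixing weights). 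Bounding the max by a sum of absolute values and combining the lower bound $f_j \geq c \psi_j$ with (B3)--(B4) then controls $\int f^*_i \lvert \log(f^*_j/f_j)\rvert\,d\lambda$ for every pair $(i,j)$, yielding (A1b). Assumption (B1) gives $g(y) \leq (2\pi)^{-1/2}\int \sigma^{-1}\,dP(z,\sigma) < \infty$ $\pi_g$-almost surely, which is (A1d); strict positivity of any Gaussian location-scale mixture gives (A1c); and (A1e) follows by splitting the integration set according to whether $\sum_j f_j(y) \geq 1$ or $<1$, controlling the first part via (A1d) and (B5) and the second via $\sum_j f_j \geq c \psi_j$ together with (B4).

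\textbf{Verification of (A2).} Given $\delta>0$, I would choose $\beta>0$ small (depending on $\delta, l, k, \underline q$) and invoke (B6) to produce $a_n, l_n, u_n$ and $\beta_0$. Set
\[
\mathcal{F}_n = \bigl\{\, f = (\phi * P_1, \dots, \phi * P_k) \;:\; P_j\bigl((-a_n, a_n] \times (l_n, u_n]\bigr) \geq 1 - \kappa,\; 1 \leq j \leq k \,\bigr\},
\]
so that $\pi_f(\mathcal{F}_n^c) \leq k e^{-n\beta_0}$, which matches (A2) with $r_1 = \beta_0$. Standard $L_1$ entropy bounds for Gaussian mixtures with truncated mixing supports (Ghosal--van der Vaart; cf.\ Tokdar's Lemma A.3) give
\[
\log N\bigl(\eta,\, \{\phi * P : \mathrm{supp}\,P \subset [-a,a] \times [l,u]\},\, \lVert\cdot\rVert_{L_1}\bigr) \lesssim \frac{a}{l}\,\log\frac{u}{l}\cdot \log\frac{1}{\eta}.
\]
Taking $k$-fold products and inserting $a_n / l_n \leq n\beta$ and $\log(u_n/l_n) \leq n\beta$ yields $\log N(\delta/(36 l),\mathcal{F}_n, d) \lesssim k n \beta^2 \log(36 l / \delta)$, which can be made strictly smaller than $n\delta^2 k^2 \underline q^2/(64 l)$ by taking $\beta$ sufficiently small in terms of $\delta k \underline q / \sqrt{l}$. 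Summability of the series in (A2) follows at once.

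\textbf{Main obstacle.} The delicate point is the mixed integrals $\int f^*_i \log(f^*_j/f_j)$ with $i \neq j$ entering (A1b) and (A1e): they are not standard KL neighborhoods of a single density and so Tokdar's statement cannot be used as a black box. Overcoming this requires extracting the pointwise lower bound $f_j \geq c \psi_j$ from his construction and combining it with (B4) to handle the cross terms. Once that is done the rest is essentially bookkeeping and routine adaptation of the i.i.d. Gaussian-mixture theory.
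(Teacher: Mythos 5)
Your strategy is the one the paper follows: for (A1), reproduce Tokdar's Theorem 3.2/Lemma 3.1 with the cross-weighted hypotheses (B3)--(B4), take weak neighborhoods of compactly supported mixing measures $\tilde{P}_j$ (which have positive $\pi_P$-mass by the support assumption and yield a pointwise Gaussian-type lower bound on $f_j$), and use (B1) for the uniform upper bound (A1d); for (A2), restrict the mixing measures to boxes $(-a_n,a_n]\times(l_n,u_n]$ via (B6) and invoke Tokdar's Section~4 entropy bounds. One caveat on (A1e): the paper does not argue through $\psi_j$ and (B4) there. It uses the lower bound $f_j(y)\gtrsim \phi_{\underline{\sigma}-\delta}(\lvert y\rvert +\zeta+\delta)$ valid uniformly over the weak neighborhood forcing $P_j$-mass near the compact rectangle, so that $\lvert\log(\sum_j f_j)\rvert\lesssim 1+y^2$, which is $f_i^*$-integrable by the moment condition (B5). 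Your route via $\sum_j f_j\geq c\,\psi_j$ would require $\int f_i^*(-\log f_j^*)^+\,d\lambda<\infty$ for the cross pairs $i\neq j$ (since $-\log\psi_j=\log(f_j^*/\psi_j)-\log f_j^*$), and this is not among (B1)--(B5): (B3) only controls $\max_j\log f_j^*$, not $-\log f_j^*$ for an individual $j$. The same remark shows that in (A1b) you should keep the one-sided bound $\log(f_j^*/f_j)\leq\log(1/c)+\log(f_j^*/\psi_j)$ rather than passing to absolute values.

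The concrete gap is in your verification of (A2). The entropy bound you quote is in product form, $\log N(\eta,\cdot,\lVert\cdot\rVert_{L_1})\lesssim \frac{a}{l}\log\frac{u}{l}\log\frac{1}{\eta}$; inserting $a_n/l_n\leq n\beta$ and $\log(u_n/l_n)\leq n\beta$ into a product gives order $n^2\beta^2$, not the $kn\beta^2$ you wrote. With a bound growing like $n^2$, the series $\sum_n N(\delta/36l,\mathcal{F}_n,d)\exp(-n\delta^2k^2\underline{q}^2/(32l))$ diverges for every $\beta>0$, so the argument fails as written. The bound actually needed (and used in the paper, from Tokdar's Section~4) is additive with constants depending only on the covering radius $\kappa$:
\begin{equation*}
\log N\bigl(3\kappa,\{\phi*P: P((-a,a]\times(l,u])>1-\kappa\},\lVert\cdot\rVert_{L_1(\lambda)}\bigr)\leq b_0\Bigl(b_1\frac{a}{l}+b_2\log\frac{u}{l}+1\Bigr),
\end{equation*}
so that after taking the $k$-fold product the exponent is of order $kb_0(b_1+b_2)n\beta$, linear in $n$, and choosing $\kappa=\delta/(3\cdot 36 l)$ and $\beta<\delta^2 k\underline{q}^2/(32\,l\,b_0(b_1+b_2))$ gives summability. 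Indeed the very form of (B6), which bounds $a_n/l_n$ and $\log(u_n/l_n)$ separately by $n\beta$ with $\beta$ arbitrary, only makes sense when these two quantities enter the entropy additively. With that substitution (and your prior-mass bound $\pi_f(\mathcal{F}_n^c)\leq k e^{-n\beta_0}$, which is fine), your (A2) argument coincides with the paper's.
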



 In particular in the case of the Dirichlet process mixture $DP(\alpha G_0)$ with base measure $\alpha G_0$, where
 $G_0$ is a probability measure on $\mathbb{R}\times (0,+\infty)$ and $\alpha>0$,
 Assumption (B1) holds as soon as 
 \begin{equation}\label{eqrk}
  \int_{\mathbb{R}\times (0,+\infty)} \frac{1}{\sigma} G_0(dz,d\sigma)<+\infty.
 \end{equation}
Indeed,
\begin{equation*}\begin{split}
\int \int \frac{1}{\sigma} P(dz,d\sigma) \pi_P(dP) 
& = \int \int \int_{\mathbb{[\sigma,+\infty)}} \frac{1}{t^2} \lambda(dt) P(dz,d\sigma) \pi_P(dP) \\
& = \int \frac{1}{\sigma}  G_0(dz,d\sigma).
\end{split}
\end{equation*}

Moreover Assumption (B6) easily holds as soon as 
 for all $\beta>0$, there exist a real number $\beta_0>0$,two increasing and positive sequences 
 $a_n$ and $u_n$ tending to $+\infty$ and a sequence
 $l_n$ decreasing to $0$ such that 
 \begin{equation}\label{eqrk2}
 \begin{split}
  G_0\left(\left(-a_n,a_n]\times (l_n,u_n]\right)^c \right)\leq \exp(-n \beta_0) \\
  \frac{a_n}{l_n}  \leq n\beta, \qquad \log\left(\frac{u_n}{l_n}\right) \leq n \beta
 \end{split}
 \end{equation}
 are verified (see Remark 3.1 of \citet{To06}).

%


\subsection{Translated emission probabilities}\label{TEP}

In this section we consider the special case of translated emission distributions that is to say for all $1 \leq j \leq k$, 
$$f_j(\cdot)=g( \cdot - m_j)$$ 
where $g$ is a density function on $\mathbb{R}$ with respect to $\lambda$
and for all $1 \leq j \leq k$, $m_j$ is in $\mathbb{R}$. In this part, $\lambda$ is still the Lebesgue measure on $\mathbb{R}$ and $d=1$\textcolor{black}{.}
This model has been in particular considered by \citet{YaPaRoHo11} for the analysis of genomic copy number variation.
First a corollary of Theorem \ref{th2} is given.
Then the particular case of location-scale mixture of Gaussians on $g$ is studied.

Let 
$$\Gamma=\{ \gamma=(Q,m,g), Q \in\Delta^k_k, m \in \mathbb{R}^k,m_1=0<m_2< \dots< m_k, g\in \mathcal{F} \}$$
 and 
 $$\Gamma(\underline{q})=\{ \gamma=(Q,m,g) \in \Gamma, Q \in \Delta^k(\underline{q}) \}.$$

 To $\gamma=(Q,m,g) \in \Gamma$ we associate $\theta=(Q,(g(\cdot-m_1),\dots,g(\cdot-m_k))) \in \Theta$.
We then denote $\mathbb{P}^\gamma$ for $\mathbb{P}^\theta$.
We assume that $\pi_f$ is a product of probability measure,
$$\pi_f=\pi_m \otimes \pi_g$$
where $\pi_g$ is a distribution on $\mathcal{F}$
and $\pi_m$ is a probability measure on $\mathbb{R}^k$. 
Note that under $\Gamma$, the model is completely identifiable, see Theorem 2.1 of \citet{GaRo13}.
 The uncertainty we had until now because of the label switching is resolved here. In Corollary \ref{th3} additionally to posterior consistency for the transition matrices, we obtain posterior consistency for the parameters of \textcolor{black}{translation} $m_j$ and for the weak convergence on the translated probability $g\lambda$. 
Under a stronger assumption, we get posterior consistency for the $L_1$-topology on the translated probability.
  
   Fix $l\geq 3$.
The following assumption replaces (A2) in the context of translated emission probabilities:

%
%
%
%
%
%
%
(C2) for all $n>0$, for all $\delta>0$
there exists a set $\mathcal{F}_n \subset \mathbb{R}^k \times \mathcal{F}$ and a real number $r_1>0$ such that 
$\pi_f\big(({\mathcal{F}_n})^c\big)\lesssim e^{-nr_1}$ 
$$ \sum_{n>0}  N\left(\frac{\delta}{36l}, \mathcal{F}_n , d(\cdot,\cdot)\right)
\exp\left(-\frac{n \delta^2 k^2 \underline{q}^2}{32 l}\right)< + \infty.$$

\begin{corollary}\label{th3}
 Let $\gamma^*=(Q^*,m^*,g^*)$ be in $\Gamma(\underline{q})$. Suppose $m^*_1=0<m^*_2< \dots< m^*_k $ 
and $Q^*$ has full rank.
 Let $\underline{q}>0$,
 assume that  $\mu_i\geq\underline{q}$, that the support of the prior $\pi$ is included in $\Gamma(\underline{q})$, that (A1) is verified with $f_j(\cdot)=g(\cdot-m_j), ~ 1\leq j \leq k$ and (C2) holds.

Then for all $\epsilon>0$, 
$$\mathbb{P}^{\gamma^*}\Big(\lim_{n \to +\infty} \pi(\left\{Q: \lVert Q- Q^* \rVert < \epsilon \right\} \big|  ~ Y_{1:n}) =1 \Big)=1,$$
$$\mathbb{P}^{\gamma^*}\Big(\lim_{n \to +\infty} \pi(\left\{m: \forall 1 \leq j \leq k, ~ \lvert m_j- m^*_j \rvert < \epsilon \right\} \big| ~  Y_{1:n}) =1 \Big)=1,$$
and for all $N \in \mathbb{N}$, for all $h_i \in \mathcal{C}_b(\mathbb{R}^d)$, for all $\epsilon_i>0,~ 1 \leq i\leq N,$
$$\mathbb{P}^{\gamma^*}\Bigg(\lim_{n \to +\infty} \pi\bigg(\left\{P:  \left\lvert \int h_i dP- \int h_i g^*d\lambda \right\rvert < \epsilon_i \right\} \bigg| ~  Y_{1:n}\bigg) =1 \Bigg)=1.$$

If moreover $\max_{1 \leq j \leq k} \mu^*_j>1/2$ and $g^*$ is uniformly continuous, then for all $\epsilon>0$,
$$\mathbb{P}^{\gamma^*}\Big(\lim_{n \to +\infty} \pi\left(\left\{g: \lVert g- g^* \rVert_{L_1(\lambda)} < \epsilon \right\} | Y_{1:n}\right) =1 \Big)=1.$$
\end{corollary}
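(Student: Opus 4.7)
The plan is to deduce the corollary from Theorem \ref{th2} by exploiting the strict ordering $m_1 = 0 < m_2 < \dots < m_k$ on $\Gamma$ to remove the label switching, and then to strengthen the weak conclusion for $g$ into $L_1$-convergence by a deconvolution argument using the extra hypothesis $\max_j \mu^*_j > 1/2$. Pushing the prior on $\Gamma(\underline{q})$ forward through $(Q,m,g)\mapsto (Q,(g(\cdot-m_j))_j)$ puts us in the setting of Theorem \ref{th2} (linear independence of the $g^*(\cdot-m_j^*)\lambda$ follows from distinct translates, and (A1), (C2) are exactly (A1), (A2) for the pushforward), so that for any weak neighborhoods $U_{f_i^*}$ of $g^*(\cdot - m_i^*)\lambda$ and any neighborhood $U_{Q^*}$ of $Q^*$, the posterior asymptotically concentrates on
\[
\{\exists \sigma \in \mathcal{S}_k:\ \sigma Q \in U_{Q^*},\ g(\cdot - m_{\sigma(i)})\in U_{f_i^*},\ 1\leq i \leq k\}.
\]

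To eliminate the permutation I would pick each $U_{f_i^*}$ as a weak neighborhood tested against bounded continuous bumps centered at $m_i^*$, so that membership forces $m_{\sigma(i)}$ to lie within $\tfrac12\min_j(m^*_{j+1}-m^*_j)$ of $m_i^*$; combined with the strict ordering of both $m$ and $m^*$, only $\sigma=\mathrm{id}$ survives. The three listed conclusions then follow: consistency of $Q$ in $\lVert\cdot\rVert$, consistency of each $m_j$, and weak consistency of $g\lambda$ (the latter because weak convergence of $g(\cdot-m_j)\lambda$ to $g^*(\cdot-m_j^*)\lambda$ together with $m_j\to m_j^*$ yields $g\lambda\Rightarrow g^*\lambda$ by translation).

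For the $L_1$ strengthening, Theorem \ref{th1}\ref{b.} gives $L_1$-consistency of the stationary marginal density $p_1^\theta = g*\nu^\theta$ with $\nu^\theta = \sum_j \mu^Q_j\delta_{m_j}$. Writing $g*\nu^\theta - g^**\nu^{\theta^*} = (g-g^*)*\nu^\theta + g^**(\nu^\theta-\nu^{\theta^*})$, the second summand vanishes in $L_1$ on the posterior (by consistency of $\mu^Q$, $m$ and $L_1$-continuity of translation applied to $g^*$), hence $\|(g-g^*)*\nu^\theta\|_{L_1}\to 0$. The assumption $\max_j\mu^*_j>1/2$ yields $|\widehat{\nu^{\theta^*}}(t)|\geq 2\max_j\mu^*_j-1>0$ uniformly in $t$, an inequality transferring (up to a factor arbitrarily close to $1$) to $\widehat{\nu^\theta}$ on the posterior. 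A Fourier deconvolution then delivers $\|g-g^*\|_{L_1}\to 0$. I expect this last deconvolution to be the main obstacle: turning $L_1$-smallness of $(g-g^*)*\nu^\theta$ together with a uniform lower bound on $|\widehat{\nu^\theta}|$ into $L_1$-smallness of $g-g^*$ requires an inversion argument in which the uniform continuity of $g^*$ provides the equicontinuity that makes the inverse bounded on the relevant family of densities.
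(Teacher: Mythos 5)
Your reduction to Theorem \ref{th2} is fine as far as it goes (and the linear independence of the translates $g^*(\cdot-m^*_j)\lambda$ does hold, via Fourier transforms), but the step you use to kill the label switching and to get consistency of the $m_j$'s has a genuine gap. The claim that membership of $g(\cdot-m_{\sigma(i)})\lambda$ in a weak neighborhood of $g^*(\cdot-m^*_i)\lambda$ built from bumps centered at $m^*_i$ ``forces $m_{\sigma(i)}$ to lie within $\tfrac12\min_j(m^*_{j+1}-m^*_j)$ of $m^*_i$'' is false: such a test only constrains where the mass of $g(\cdot-m_{\sigma(i)})$ sits, and $g$ is itself an unknown density whose mass need not be located near the origin. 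For example, $g$ close to $g^*(\cdot-c)$ with $m_{\sigma(i)}$ near $m^*_i-c$ passes every bump test exactly while $m_{\sigma(i)}$ is far from $m^*_i$; only the joint constraints (the same $g$ in all $k$ components, the anchor $m_1=0=m^*_1$, and the orderings of $m$ and $m^*$) exclude this, and your sketch never exploits them. This is precisely where the paper does the work: it re-runs the compactness/subsequence argument of Theorem \ref{th2} in the $\Gamma$-parametrization, invokes the complete identifiability of the translation model (Theorem 2.1 of \citet{GaRo13}), so no permutation ever appears, deduces $g^n\lambda\Rightarrow g^*\lambda$ from $m_1=0$, and then obtains $m^n_j\to m^*_j$ by a characteristic-function argument, using that $\widehat{g^n}(t)e^{itm^n_j}\to\widehat{g^*}(t)e^{itm^*_j}$ pointwise and that $\widehat{g^*}$ is nonvanishing on a neighborhood of $0$. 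Without this (or an equivalent) argument, neither the elimination of $\sigma$ nor the second displayed conclusion on the $m_j$'s is established, and your third conclusion (weak consistency of $g\lambda$), which you derive from $m_j\to m^*_j$, falls with it.

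The $L_1$ part of your proposal is closer to being right but is left incomplete at the step you yourself flag. The decomposition $g*\nu^\theta-g^**\nu^{\theta^*}=(g-g^*)*\nu^\theta+g^**(\nu^\theta-\nu^{\theta^*})$ is sound, and the deconvolution can in fact be carried out: once $\max_j\mu^\theta_j>1/2$ (which holds on the concentrating posterior sets), $\nu^\theta$ is invertible in the convolution algebra of finite signed measures via a Neumann series, with inverse of total variation at most $(2\max_j\mu^\theta_j-1)^{-1}$, so $\lVert g-g^*\rVert_{L_1}\leq (2\max_j\mu^\theta_j-1)^{-1}\lVert (g-g^*)*\nu^\theta\rVert_{L_1}$; note that uniform continuity of $g^*$ plays no role in bounding this inverse (contrary to your last sentence) — it is only used to control $\lVert g^*(\cdot-m_j)-g^*(\cdot-m^*_j)\rVert_{L_1}$. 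The paper sidesteps the whole issue by quoting the inequality from the proof of Corollary 1 of \citet{GaRo13}, namely $D_1(\gamma,\gamma^*)\geq (2\max_j\mu^*_j-1)\lVert g-g^*\rVert_{L_1}-\max_j\lvert\mu^*_j-\mu_j\rvert-\max_j\lVert g^*(\cdot-m_j)-g^*(\cdot-m^*_j)\rVert_{L_1}$, which together with the first part yields the $L_1$ conclusion immediately; your Fourier route would amount to reproving that inequality, which is acceptable but must actually be done.
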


The proof of Corollary \ref{th3}, in Appendix \ref{P:th3}, relies on the identifiability result of \citet{GaRo13} and the technique of proof of Theorem \ref{th2}.

\vspace{0.5cm}
In the same way as in Section \ref{MG}, we propose to apply Theorem \ref{th1} and Corollary \ref{th3} to a prior based on location-scale mixtures of Gaussians.
In this part we study a particular prior on the translated emission \textcolor{black}{density} $g$ which is the location-scale mixture of Gaussians.
\textcolor{black}{Then} $g$ is a sample drawn from $\pi_{g}$ if 
$$g(y)=\int_{\mathbb{R}\times (0,+\infty)} \phi_\sigma(y-z)dP(z,\sigma)$$
where $P$ is a sample drawn from $\pi_P$ and $\pi_P$ is a probability measure on probability measures on $\mathbb{R}\times(0, + \infty)$.
The following assumption help in proving (C2):

\quad (D6) for all $\beta>0$, $\kappa>0$, there exist a real number $\beta_0>0$, three increasing sequences of positive numbers
$m_n$, $a_n$ and $u_n$ tending to $+\infty$ and a sequence
 $l_n$ decreasing to $0$ such that 
 $$\pi_P\bigg(P~ :~P((-a_n,a_n] \times (l_n,u_n]) < 1- \kappa \bigg) \leq \exp(-n\beta_0), $$
$$\pi_m\bigg( ([-m_n,m_n]^k)^c\bigg) \leq exp(-n \beta_0),$$
  $$\frac{a_n}{l_n} \leq n\beta, \qquad \log\left(\frac{u_n}{l_n}\right) \leq n \beta, \qquad   \log\left(\frac{m_n}{l_n}\right) \leq n\beta$$

\begin{proposition}\label{th:tGM}
Let $\underline{q}>0$ and $\gamma^*$ in $\Gamma(\underline{q})$.
Assume  that the support of the prior $\pi$ is included in $\Gamma(\underline{q})$
 and that for all $1\leq i\leq k$, $\mu_i \geq \underline{q}$. Assume that $Q^*$ is in the support of $\pi_Q$, that $m^*$ is in the support of $\pi_m$ and that  the weak support of $\pi_P$ contains all probability measures that are compactly supported.

If (B1) is verified and (B2), (B3), (B4) and (B5) are verified with $f_j(\cdot)=\textcolor{black}{g}(\cdot-m_j), ~ 1 \leq j \leq k$ then (A1) holds.

Moreover (D6) implies (C2).
\end{proposition}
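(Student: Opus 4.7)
The strategy is to mimic the proof of Proposition \ref{th:MG}, treating the translation coordinate $m$ as an additional independent factor in the prior. The transport map $(Q,m,g)\mapsto (Q,(g(\cdot-m_1),\ldots,g(\cdot-m_k)))$ sends $\Gamma$ into $\Theta$, so that verifying (A1) for the translated model amounts to verifying (A1) for the pushed-forward parameter, and (C2) plays the role of (A2) with a sieve in $\mathbb{R}^k\times\mathcal{F}$ rather than in $\mathcal{F}^k$.

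For (A1), I would build $\Theta_\epsilon$ as the image of a product set: a small sup-norm ball around $Q^*$ (positive prior mass because $Q^*\in\mathrm{supp}(\pi_Q)$), a small Euclidean ball $\{|m_j-m_j^*|<\eta,\ 1\le j\le k\}$ (positive prior mass because $m^*\in\mathrm{supp}(\pi_m)$), and a Tokdar-type Kullback--Leibler neighborhood of $g^*$ built from the weak support of $\pi_P$ together with (B1). Conditions (A1a), (A1c), (A1d) and (A1e) follow verbatim from the corresponding arguments in the proof of Proposition \ref{th:MG}: every $g=\phi*P$ is strictly positive, globally bounded by $1/(l\sqrt{2\pi})$ once scales exceed $l$, and has at most Gaussian tails, so that $|\log\sum_j g(\cdot-m_j)|$ is dominated by a polynomial in $y$ that is integrable against $f_i^*$ thanks to (B5). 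The one step that is genuinely new is (A1b). After the change of variable $z=y-m_j^*$ with $h:=m_j^*-m_j$,
$$\int f_j^*(y)\log\frac{f_j^*(y)}{g(y-m_j)}\lambda(dy)=\int g^*(z)\log\frac{g^*(z)}{g^*(z+h)}\lambda(dz)+\int g^*(z)\log\frac{g^*(z+h)}{g(z+h)}\lambda(dz).$$
The second integral is the standard Tokdar KL bound between $g^*$ and $g$ (translation-invariant in $z$); the first is a pure shift term. For $|h|\le 1$, $g^*(z+h)\ge \psi_j(z+m_j^*)=\inf_{s\in[z-1,z+1]}g^*(s)$, and (B4) with $i=j$ provides an integrable envelope for $\log(g^*/\psi_j(\cdot+m_j^*))$. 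Combined with the continuity of $g^*$ from (B2), dominated convergence then forces the shift term to $0$ as $\eta\to 0$, which closes (A1b) uniformly over the neighborhood.

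For (C2), I would set $\mathcal{F}_n=[-m_n,m_n]^k\times\mathcal{G}_n$ where $\mathcal{G}_n=\{\phi*P:P((-a_n,a_n]\times(l_n,u_n])\ge 1-\kappa\}$ is Tokdar's sieve for location-scale mixtures. The prior-mass tail $\pi_f(\mathcal{F}_n^c)\lesssim e^{-n\beta_0}$ is immediate from the two probability estimates in (D6). For the covering number, the triangle inequality
$$d\bigl((m,g),(\tilde m,\tilde g)\bigr)\le\max_j\|g(\cdot-m_j)-g(\cdot-\tilde m_j)\|_{L_1(\lambda)}+\|g-\tilde g\|_{L_1(\lambda)}$$
reduces the problem to covering $[-m_n,m_n]^k$ and $\mathcal{G}_n$ separately. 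For $g\in\mathcal{G}_n$ all scales are at least $l_n$, so $g$ is Lipschitz in the location with $L_1$-modulus $\lesssim |m_j-\tilde m_j|/l_n$; hence $[-m_n,m_n]^k$ is covered by roughly $(m_n/(l_n\delta))^k$ points, and $\mathcal{G}_n$ by Tokdar's $L_1$-net. The three geometric conditions $a_n/l_n\le n\beta$, $\log(u_n/l_n)\le n\beta$ and $\log(m_n/l_n)\le n\beta$ in (D6) are precisely what makes the product of logarithms of these two covering numbers grow like $o(n\delta^2)$, so that the entropy series in (C2) converges, exactly as (B6) functions in Proposition \ref{th:MG}.

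The main obstacle is the pure-shift contribution in the Kullback--Leibler bound for (A1b): this is the only place where an argument beyond the untranslated Tokdar machinery is required, and it is the reason (B4) is stated with the local infimum $\psi_j$ over an interval of radius $1$ rather than a purely pointwise bound on $\log f_j^*$. The entropy side contains no conceptual novelty but does force the extra tradeoff $\log(m_n/l_n)\le n\beta$ in (D6), coming from the $1/l_n$ cost of shifting a Gaussian mixture whose scales are not bounded below.
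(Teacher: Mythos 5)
Your treatment of (C2) coincides with the paper's proof: the sieve $[-m_n,m_n]^k\times\{\phi*P:\,P((-a_n,a_n]\times(l_n,u_n])\ge 1-\kappa\}$, the triangle inequality separating the location shift from the change of mixing measure, the Lipschitz bound $\lVert\phi*P(\cdot-m_i)-\phi*P(\cdot-\tilde m_i)\rVert_{L_1(\lambda)}\le \frac{1}{l}\sqrt{2/\pi}\,\lvert m_i-\tilde m_i\rvert$, and the role of the extra condition $\log(m_n/l_n)\le n\beta$ are exactly as in the paper, so that half is fine.

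The (A1b) step, however, contains a genuine gap. After your change of variables, the second integral is $\int g^*(w-h)\log\frac{g^*(w)}{g(w)}\,dw$: the weight is the translate $g^*(\cdot-h)$ while the log-ratio is untranslated, so it is \emph{not} ``the standard Tokdar KL bound, translation-invariant in $z$''. The weak neighborhood produced by the argument of Proposition \ref{th:MG} controls integrals of $\max_j\log\bigl(g^*(\cdot-m^*_j)/(\phi*P)(\cdot-m^*_j)\bigr)$ against the fixed weights $g^*(\cdot-m^*_i)$, not against weights shifted by an arbitrary small $h$; and (A1b) in fact requires $\int f^*_i\max_j\log(f^*_j/f_j)$ for every $i$, so the same mismatch appears for every pair $(i,j)$ and cannot be reduced to your diagonal term. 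The step is repairable: by $\log t\le t-1$ one has $\int g^*(z)\log\frac{g^*(z+h)}{g^*(z)}\,dz\le 0$, hence the second term is bounded by $KL(g^*,\phi*P_h)$ with $P_h$ the $h$-shift of $P$, and one must then check that the weak neighborhood of the compactly supported $\tilde P$ can be chosen stable under shifts $\lvert h\rvert\le\eta$ (possible, since it is defined by finitely many uniformly continuous test functions) --- but this extra argument is missing from your proposal; likewise your dominated-convergence claim for the pure-shift term only dominates the positive part (for the negative part use $g^*(z)\log\frac{g^*(z+h)}{g^*(z)}\le (g^*(z+h)-g^*(z))_+$, whose integral vanishes by $L_1$-continuity of translation). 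The paper's proof avoids the issue altogether: it never compares $g^*$ with its own translate. It compares $\phi*\tilde P(y-m^*_j)$ with $\phi*P(y-m_j)$ directly, on $\lvert y\rvert\le C$ through an Arzel\`a--Ascoli finite net $y_1,\dots,y_I$ defining the weak neighborhood $V_\delta$ together with the admissible shift radius $\lvert m_j-m^*_j\rvert\le c\underline{\sigma}\delta\sqrt{2}/\sqrt{\pi}$, which yields the uniform $4\delta/(1-4\delta)$ bound, and on $\lvert y\rvert> C$ through the Gaussian-tail envelope integrated using (B5); (B4) enters only in constructing $\tilde P$, not in controlling a translation term.
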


The proof of Proposition \ref{th:tGM} is very similar to that of Proposition \ref{th:MG} 
and is given in Appendix \ref{PtGM}.



\subsection{Independent discrete emission distributions}\label{D}

Discrete emission probabilities, i.e. when the support of $\lambda$ is included in $\mathbb{N}$, have been
successfully used, for instance in genomics in \citet{GaClRo13}.

Note that for discrete emission probabilities, weak and $l_1$ convergences are the same so that weak posterior convergence implies 
$l_1$ posterior consistency. Thus Assumption (A2) becomes unnecessary in Theorems \ref{th1} and \ref{th2}. \textcolor{black}{ Moreover posterior consistency for the emission distributions in the weak topology  in Theorem \ref{th2} implies posterior consistency for the emission distributions in $l_1$.}

\textcolor{black}{In the discrete case, we prove in Appendix \ref{P:FD} that posterior consistency for the marginal probability of finitely many observations , for the transition matrix and for the emission distributions in $l_1$ together with the restriction of the prior on $\Delta^k(\underline{q})$ imply posterior consistency for the marginal smoothing:}

\begin{theorem}\label{FD}
\textcolor{black}{
Let $\underline{q}>0$.
Assume that the support of the prior $\pi$ is included in $\Theta(\underline{q})$
 and that for all $1\leq i\leq k$, $\mu_i\geq\underline{q}$.
If  $f^*_1 \lambda, \dots,f^*_k \lambda$ are linearly independent, $Q^*$ has full rank and (A1) holds
then for all finite integer $m$,}
\begin{equation*}
\begin{split}
\lim_{n \to +\infty} \pi\bigg(
\max_{1 \leq a_{1:m}\leq k} \lvert  P^\theta(X_{1:m}=a_{1:m}~|Y_{1:n}) \\
&\hspace{-4cm} - P^{\theta^*}(X_{1:m}=a_{1:m} ~|~Y_{1:n})\rvert <\epsilon   |Y_{1:n}\bigg)
= 1  \text{ in } P^{\theta^*}\text{-probability}.
\end{split}
\end{equation*}
\end{theorem}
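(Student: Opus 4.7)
The plan is to combine Theorem \ref{th2} (posterior concentration of the parameter up to label switching, specialised to the discrete setting where weak convergence of emission laws coincides with $\ell_1$ convergence) with a deterministic stability estimate showing that the smoothing probabilities are continuous functions of $\theta$, uniformly in the length $n$ of the conditioning sequence, on a high $P^{\theta^*}$-probability set. Because Theorem \ref{th2} only identifies $\theta^*$ modulo $\mathcal{S}_k$, one has to interpret the statement relabel-wise: for $\theta$ close to $\sigma\theta^*$, the smoothing probability at $a_{1:m}$ under $\theta$ is close to the smoothing probability at $\sigma^{-1}(a_{1:m})$ under $\theta^*$, and the max over $a_{1:m}$ then matches up after taking the union over $\sigma$.

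First I would apply Theorem \ref{th2}: for every $\eta>0$, the posterior mass of
$$V_\eta := \bigcup_{\sigma \in \mathcal{S}_k}\Big\{\theta=(Q,f) : \lVert Q - \sigma Q^*\rVert <\eta,\ \max_{j}\lVert f_j - f^*_{\sigma(j)}\rVert_{\ell_1} <\eta \Big\}$$
tends to $1$ in $P^{\theta^*}$-probability. Hence it suffices to show that for every $\epsilon>0$ there exists $\eta>0$ such that for each $\theta\in V_\eta$ with associated permutation $\sigma$,
$$\max_{a_{1:m}}\bigl\lvert P^\theta(X_{1:m}=a_{1:m}\mid Y_{1:n}) - P^{\theta^*}(X_{1:m}=\sigma^{-1}(a_{1:m})\mid Y_{1:n})\bigr\rvert <\epsilon$$
with $P^{\theta^*}$-probability at least $1-\epsilon$, uniformly in $n\geq m$.

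The core is a uniform-in-$n$ sensitivity estimate for the smoothing distribution. I would write $P^\theta(X_{1:m}=a_{1:m}\mid Y_{1:n})$ via forward--backward as a ratio of positive combinations of products of entries of $Q$ and values $f_j(Y_t)$. The crucial ingredient is exponential forgetting for HMMs with $Q\in\Delta^k(\underline q)$: the chain is uniformly geometrically ergodic with rate $\rho=\rho(\underline q)<1$, so the smoothing law at times $1,\dots,m$ depends on $Y_{m+1:n}$ only through a geometrically vanishing remainder. Fixing $L=O(\log(1/\epsilon))$ sufficiently large, one reduces the problem, up to an $\epsilon/2$ error uniform in $n$, to comparing two functions of $\theta$ evaluated on the fixed-length window $Y_{1:m+L}$. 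On this window the smoothing probability is a rational function of $\theta$; its denominator is bounded below by a constant depending only on $\underline q$, $L$, and $\prod_{t=1}^{m+L}\max_j f^*_j(Y_t)$, and the latter is bounded away from $0$ on a set of $P^{\theta^*}$-probability $\geq 1-\epsilon/2$ by standard tail arguments using assumption (A1e) and a union bound. Continuity in $\theta$ of this rational function on the chosen compact set then yields the required closeness whenever $\eta$ is small enough.

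The main obstacle is the uniform-in-$n$ sensitivity: a naive expansion of the likelihood ratio gives a bound that grows with $n$, so the forgetting property of the filter and smoother on $\Delta^k(\underline q)$ is essential. A secondary difficulty is the bookkeeping of label switching; one must either carry the permutation $\sigma$ throughout as above and note that $\max_{a_{1:m}}$ is invariant under $\sigma$, or restrict to a representative of each orbit (e.g.\ via an identifiability constraint built into $\pi$). Once both issues are handled, combining Theorem \ref{th2} with the sensitivity bound and using the symmetry of the statement under permutations yields the claim in $P^{\theta^*}$-probability.
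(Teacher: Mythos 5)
Your proposal is correct and takes essentially the same route as the paper: its proof combines Theorems \ref{th1} and \ref{th2} (using that weak $=\ell_1$ convergence in the discrete case) with a deterministic, uniform-in-$n$ stability estimate for the marginal smoothing probabilities (Lemma \ref{lem}), established exactly as you sketch via exponential forgetting of the smoother on $\Delta^k(\underline{q})$ to reduce to a fixed observation window plus a lower bound $p_N^{\theta^*}(Y_{1:N})>c$ holding with high $P^{\theta^*}$-probability, and then concludes in probability by a Markov-type argument. The only differences are minor: the paper additionally invokes the $D_N$-consistency of Theorem \ref{th1} to control the denominator (where you use componentwise closeness together with $\min_{t}\max_j f^*_j(Y_t)>c$, which also works and does not need (A1e)), and it treats the label-switching bookkeeping about as implicitly as you do.
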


In the \textcolor{black}{following} we apply Theorems \ref{th1}, \ref{th2} and \ref{FD} to a specific prior on the set of probability measures on $\mathbb{N}$ in the case of a HMM with discrete emission distributions. We consider a Dirichlet process $DP(\alpha G_0)$ 
with $\alpha$ a positive number and $G_0$ some probability measure on $\mathbb{N}$.
We then consider a prior probability measure on $\Theta$ defined by
$$\pi=\pi_Q \otimes DP(\alpha G_0)^{\otimes k}.$$

In Proposition \ref{thD}, we give sufficient and amost necessary conditions to obtain (A1).
Proposition \ref{thD} is proved in Appendix \ref{P:thD}.

\begin{proposition}\label{thD}
Let $\underline{q}>0$.
Assume that the support of the prior $\pi$ is included in $\Theta(\underline{q})$, that $Q^*$ is in the support of $\pi_Q$
 and that for all $1\leq i\leq k$, $\mu_i\geq\underline{q}$.

If
$$ \text{ (E1) for all }1 \leq i \leq k, ~
\sum_{l \in \mathbb{N}} \frac{f^*_i(l)}{G_0(l)} < + \infty$$
 then (A1) holds. 

Moreover if 
$$\text{(T) for all } 1\leq i \leq k, \sum_{l \in \mathbb{N}} f^*_i(l) (-\log f^*_i(l)) <+ \infty .$$
\newline
then (A1b) implies (E1).

%
%
\end{proposition}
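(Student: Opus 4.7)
The proposition splits into a sufficiency part $((E1) \Rightarrow (A1))$ and a near-necessity $((T) + (A1b) \Rightarrow (E1))$; I treat them separately.

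\emph{Sufficiency.} Several pieces of (A1) are essentially free in the discrete setting: (A1d) is immediate from $f_j(l)\le 1$, (A1a) holds since $Q^*\in\mathrm{supp}(\pi_Q)$, and (A1c) follows from the Sethuraman stick-breaking representation, which gives $f_j(l)>0$ almost surely exactly on $\mathrm{supp}(G_0)$, together with the fact that (E1) forces $\mathrm{supp}(f_i^*)\subset\mathrm{supp}(G_0)$. The main effort is (A1b) (and with it (A1e)). I would truncate $\mathbb{N}$ at a level $N$ chosen, using (E1), so that $\sum_{l>N} f_i^*(l)/G_0(l)<\eta$ for each $i$. Under $DP(\alpha G_0)$ the marginal of $(f(1),\dots,f(N),\sum_{l>N}f(l))$ is Dirichlet and charges any open neighborhood of $(f_i^*(1),\dots,f_i^*(N),\sum_{l>N}f_i^*(l))$, so with positive probability the bulk contribution $\sum_{l\le N}f_i^*(l)\log(f_i^*(l)/f_j(l))$ is $O(\delta)$ for arbitrarily small $\delta$. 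For the tail, conditionally on the bulk the rescaled tail is itself an independent Dirichlet process, and the digamma identity $\mathbb{E}[\log(1/f_j(l))] = \psi(\alpha G_0(\{l>N\})) - \psi(\alpha G_0(l)) \lesssim 1/(\alpha G_0(l))$ yields
\[
\mathbb{E}\Big[\sum_{l>N} f_i^*(l)\log\tfrac{1}{f_j(l)}\Big] \lesssim \tfrac{1}{\alpha}\sum_{l>N} f_i^*(l)/G_0(l) < \eta/\alpha,
\]
so a Markov inequality provides a positive-probability event on which the tail sum is $O(\sqrt{\eta})$. Intersecting the $k$ independent DP-events with a $\pi_Q$-neighborhood of $Q^*$ yields $\Theta_\epsilon$ with $\pi(\Theta_\epsilon)>0$. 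Condition (A1e) is handled by the same bounds, using $\sum_j f_j(l)\le k$ for the upper side.

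\emph{Necessity.} Setting $j=i$ in (A1b), the set $\{KL(f_i^*,f_i)<\epsilon\}$ has positive $\pi_f$-probability; under (T) this gives $S_i := \sum_l f_i^*(l)\log(1/f_i(l)) < \epsilon + H(f_i^*)$ on the same event. Under $DP(\alpha G_0)$, $f_i(l)\sim\mathrm{Beta}(\alpha G_0(l),\alpha(1-G_0(l)))$, so $\mathbb{E}[\log(1/f_i(l))] = \psi(\alpha)-\psi(\alpha G_0(l)) \sim 1/(\alpha G_0(l))$ as $G_0(l)\to 0$, and $\mathbb{E}[S_i] = (1/\alpha)\sum_l f_i^*(l)/G_0(l) + O(1)$. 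Arguing by contradiction, assume $\sum_l f_i^*(l)/G_0(l)=\infty$. Through the Sethuraman representation $f_i=\sum_m V_m\delta_{\theta_m}$ with $(B_m,\theta_m)$ iid ($B_m\sim\mathrm{Beta}(1,\alpha)$), altering any finite initial segment of the sequence modifies $S_i$ by at most a finite amount (the change multiplies each $f_i(l)$ by a bounded positive constant and adds a bounded term at $l=\theta_m$), so $\{S_i<\infty\}$ lies in the tail $\sigma$-algebra of the iid sequence and Kolmogorov's $0$--$1$ law forces $\mathbb{P}(S_i<\infty)\in\{0,1\}$. Meanwhile, the incomplete-beta asymptotic $\mathbb{P}(f_i(l)\le t) \sim t^{\alpha G_0(l)}$ as $t\to 0$ gives $\mathbb{P}(\log(1/f_i(l))\ge c/G_0(l))\ge c'$, and combining these marginal lower-tail estimates through the stick-breaking decoupling (which makes contributions over different atoms essentially independent) shows $\mathbb{P}(S_i=\infty)>0$. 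The $0$--$1$ law then forces $\mathbb{P}(S_i=\infty)=1$, contradicting (A1b) and establishing (E1).

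\emph{Main obstacle.} The delicate step is the end of the necessity argument: turning the marginal lower-tail estimates on individual $\log(1/f_i(l))$ into a joint divergence of $S_i$ in the presence of the Dirichlet correlation. The stick-breaking representation reduces this to a weighted sum over the iid atom labels $\theta_m$, where the events $\{\log(1/V_m)\ge c/G_0(\theta_m)\}$ become almost independent across $m$ and lend themselves to a Borel--Cantelli-type argument; however, tracking how the $V_m$'s accumulate at each fixed $l$ and deriving a quantitative lower bound on the sum $S_i$ from these events requires careful bookkeeping, and is where the real work of the proof lies.
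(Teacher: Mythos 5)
Your sufficiency argument is essentially the paper's: the bound $\mathbb{E}_{DP(\alpha G_0)}[-\log f_j(l)]\lesssim 1/(\alpha G_0(l))$, truncation of $\mathbb{N}$ at a level chosen via (E1), the tail-free property of the Dirichlet process to decouple the tail event from a positive-probability Dirichlet event on the bulk, and the trivial verification of (A1c), (A1d), (A1e) in the discrete case. Apart from some bookkeeping you gloss over (e.g.\ controlling $\log(1/T_j)$ where $T_j$ is the tail mass, which enters when you renormalize the tail), this half is fine and takes the same route.

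The necessity half, however, has a genuine gap, and you in fact flag it yourself: you never prove that $\sum_l f_i^*(l)/G_0(l)=\infty$ forces $S_i=\sum_l f_i^*(l)(-\log f_i(l))=\infty$ with positive probability. Infinite expectation of $S_i$ proves nothing by itself, and the marginal estimates $\mathbb{P}\bigl(\log(1/f_i(l))\ge c/G_0(l)\bigr)\ge c'$ cannot be combined across $l$ without controlling the joint law: under stick-breaking the values $f_i(l)=\sum_{m:\theta_m=l}V_m$ at different support points are \emph{not} independent (they share the stick-breaking weights), so the ``decoupling'' and Borel--Cantelli step you appeal to is precisely the unproven core of the argument, not a routine detail. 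The paper sidesteps this by a different representation: writing $f_i(l)=Z_l/\sum_{j}Z_j$ with \emph{independent} $Z_l\sim\Gamma(\alpha G_0(l),1)$ (Lemma B.2 in the appendix). Since $\sum_j Z_j<\infty$ a.s., finiteness of $S_i$ on a positive-probability event is equivalent to convergence of $\sum_l f_i^*(l)(-\log Z_l)$, a series of genuinely independent terms; Kolmogorov's $0$--$1$ law upgrades this to a.s.\ convergence, and the Three-Series Theorem then gives convergence of $\sum_l\mathbb{P}(|f_i^*(l)\log Z_l|>1)$ and of the truncated means, from which explicit Gamma-tail computations extract $\sum_l f_i^*(l)/G_0(l)<\infty$. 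Your tail-$\sigma$-algebra observation for the $0$--$1$ law is plausible, but without an independence structure across $l$ (or an equivalent device like the Gamma representation) the divergence step does not go through, so the necessity direction remains incomplete as written.
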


\begin{remark}
Therefore (E1) is not only sufficient to prove (A1b) but up to the weak assumption (T) it is also necessary.
\end{remark}

\begin{remark}
We deduce from Proposition \ref{thD} that 
\begin{equation}
\begin{split}
\Bigg\{ g^*: ~ &\mathbb{N} \to (0,1) \quad \text{such that} \quad  \sum\limits_{l \in \mathbb{N}} g^*(l) =1, \\
& \sum\limits_{l \in \mathbb{N}} g^*(l) (-\log(g^*(l))< + \infty \quad \text{and} \quad \sum\limits_{l \in \mathbb{N}} \frac{g^*(l)}{G_0(l)} < + \infty \Bigg\}
\end{split}
\end{equation}
is a subset of the Kullback-Leibler support of the Dirichlet process $DP(\alpha G_0)$.
\end{remark}

\section*{Acknowledgements}
I want to thank Elisabeth Gassiat and Judith Rousseau for their valuable comments. I also want to thank the reviewer and the editor for their helpful comments.

\appendix

\section{Proofs of key results}\label{kr}

\subsection*{Proof of Lemma \ref{KL}}\label{AKL}

For all $\theta, ~ \theta^* \in \Delta^k(\underline{q})$ the Kullback-Leibler divergence between $p^{\theta^*}_n$ and   $p_n^\theta$ verifies

\begin{equation}
\begin{split}
\frac{1}{n} & KL(p^{\theta^*}_n,p^{\theta,\mu}_n) \\
&= \frac{1}{n} \mathbb{E}_{p^{\theta^*}_n} \left(  \log\left(\frac{p^{\theta^*}_n(Y_{1:n})}{p^\theta_n(Y_{1:n})}\right)\right)\\
&= \frac{1}{n} \mathbb{E}_{p^{\theta^*}_n} \left( \log\left(\frac{\sum_{i_1,\dots, i_n=1}^k \mu^*_{i_1} Q^*_{i_1,i_2} \dots Q^*_{i_{n-1},i_n} f^*_{i_1}(Y_1) \dots f^*_{i_n}(Y_n)}
{\sum_{i_1,\dots, i_n=1}^k \mu_{i_1} Q_{i_1,i_2} \dots Q_{i_{n-1},i_n} f_{i_1}(Y_1) \dots f_{i_n}(Y_n)}\right)\right)\\
&= \frac{1}{n} \mathbb{E}_{p^{\theta^*}_n} \left( \log\left(\frac{\sum\limits_{i_1,\dots, i_n=1}^k \frac{\mu^*_{i_1} Q^*_{i_1,i_2} \dots Q^*_{i_{n-1},i_n} f^*_{i_1}(Y_1) \dots f^*_{i_n}(Y_n)}{\mu_{i_1} Q_{i_1,i_2} \dots Q_{i_{n-1},i_n} f_{i_1}(Y_1) \dots f_{i_n}(Y_n)}\mu_{i_1} Q_{i_1,i_2} \dots Q_{i_{n-1},i_n} f_{i_1}(Y_1) \dots f_{i_n}(Y_n)}
{\sum_{i_1,\dots, i_n=1}^k \mu_{i_1} Q_{i_1,i_2} \dots Q_{i_{n-1},i_n} f_{i_1}(Y_1) \dots f_{i_n}(Y_n)}\right)\right)\\
& \leq  \frac{1}{n} \mathbb{E}_{p^{\theta^*}_n} \left( \log\left(\max_{1 \leq i_1,\dots, i_n\leq k} \frac{\mu^*_{i_1} Q^*_{i_1,i_2} \dots Q^*_{i_{n-1},i_n} f^*_{i_1}(Y_1) \dots f^*_{i_n}(Y_n)}{\mu_{i_1} Q_{i_1,i_2} \dots Q_{i_{n-1},i_n} f_{i_1}(Y_1) \dots f_{i_n}(Y_n)} \right)\right)\\
& \leq \frac{1}{n} \mathbb{E}_{p^{\theta^*}_n} \left( \log\left(\max_{1 \leq i\leq k} \frac{\mu^*_{i}}{\mu_i} \left(\max_{1 \leq i,j\leq k} \frac{Q^*_{i,j}}{Q_{i,j}}\right)^{n-1} \max_{1 \leq i\leq k} \frac{f^*_{i}(Y_1)}{f_{i}(Y_1)} \dots \max_{1 \leq i\leq k} \frac{f^*_{i}(Y_n)}{f_{i}(Y_n)}  \right)\right)\\
& \leq \frac{1}{n\underline{q}} \max_{1 \leq i\leq k} \left\lvert \mu_i - \mu^*_i \right\rvert 
+   \frac{n-1}{n\underline{q}} \max_{1 \leq i,j \leq k} \left\lvert Q_{i,j} - Q^*_{i,j} \right\rvert 
+ \max_{1 \leq i\leq k} \int f^*_i(y) \max_{1 \leq j \leq k} \log \frac{f^*_j(y)}{f_j(y)} \lambda(dy).
\end{split}
\end{equation}
The last inequality comes from the following assumption:
$$\min\limits_{1\leq i,j\leq k} (\mu_i,\mu^*_i, Q_{i,j}, Q^*_{i,j})\geq \underline{q} .$$

Then for all $\epsilon>0$, for $n$ large enough,  for all  $\theta \in \Theta_\epsilon$, 
$$\frac{1}{n} KL(p^{\theta^*}_n,p^{\theta,\mu}_n) \leq \frac{3}{\underline{q}} ~ \epsilon $$

\subsection*{Proof of Theorem \ref{th1}}\label{Prth1}

This proof relies on Theorem 5 of \citet{Ba88}.
We do not assume (A2) in the first part of the proof.
First we prove that for all $a>0$,
\begin{equation}\label{eqmerge}
\mathbb{P}^{\theta^*}\left( \frac{\int_\Theta p^\theta_n (Y_1, \dots, Y_n) \pi(d\theta)}{p^{\theta^*}_n(Y_1, \dots, Y_n)}
\leq \exp(-a n) \text{ i.o.} \right)=0
\end{equation}
that is to say 
$$p^{\theta^*}_n (y_1, \dots, y_n) \lambda(dy_1) \dots \lambda(dy_n)$$
and $$\int_\Theta p^\theta_n (y_1, \dots, y_n) \lambda(dy_1) \dots \lambda(dy_n) \pi(d\theta)$$ merge with probability one.

Let $\epsilon>0$. Note that Assumption (A1a) implies that $Q^* \in \Delta^k(\underline{q})$. Then by Lemma \ref{KL}, there exists a real $\tilde{\epsilon}>0$ such that for $n$ large enough, for all $\theta \in \Theta_{\tilde{\epsilon}}$, 
 \begin{equation}\label{kleq}\frac{1}{n}KL(p^{\theta^*}_n,p^{\theta,\mu}_n) < \epsilon.\end{equation}
Moreover by Proposition 1 of \citet{Do04}, if $\theta \in \Theta(\underline{q})$ 
and if (A1c), (A1d) and (A1e) hold, 
 $$\frac{1}{n} \log \left( \frac{p^{\theta^*}_n(Y_{1:n})}{p^{\theta,\mu}_n(Y_{1:n})}\right) $$ converges $\mathbb{P}^{\theta^*}$-almost surely and in $L^1(\mathbb{P}^{\theta^*})$. 
Let $\bar{L}(\theta)$ denote this limit:
$$ \lim_{n \to \infty} \frac{1}{n} \log \left( \frac{p^{\theta^*}_n(Y_{1:n})}{p^{\theta,\mu}_n(Y_{1:n})} \right) 
 =:  \bar{L}(\theta), ~ 
\mathbb{P}^{\theta^*}\text{-a.s. and in  } L^1(\mathbb{P}^{\theta^*}).$$
Then for all $\theta \in \Theta_{\tilde{\epsilon}}$, \begin{equation}\label{klimeq}\bar{L}(\theta) \leq  \epsilon.\end{equation}
So for all $\epsilon>0$, there exists $\tilde{\epsilon}$ such that 
$$\pi\left(\theta : \bar{L}(\theta)<\epsilon\right) ~ \geq  ~ \pi(\Theta_{\tilde{\epsilon}})  ~ >0.$$
By Lemma 10 of \citet{Ba88}, for all $a>0$, (\ref{eqmerge}) is verified.

\vspace{0.5cm}

We now have to build the tests described in Theorem 5 in \citet{Ba88}, 
 to obtain posterior consistency first for the weak topology and secondly for the $D_l$-pseudometric.
 In the case of the weak topology, we follow the ideas of Section 4.4.1 in \citet{GhRa03}.
Using page 142 of \citet{GhRa03}, it is sufficient to consider 
$$ U =  \left\{P ~ : ~  \int h dP - \int h p_l^{\theta^*} d\lambda^{\otimes l}   < \epsilon,
   \right\},$$
for all   $\epsilon>0$ and  $0 \leq h \leq 1$  in the set $\mathcal{C}_b((\mathbb{R}^d)^l)$.
Choosing $\alpha$ and $\gamma$ as in  page 128 of  \citet{GhRa03}, if
$$S^n=\left\{ y_1, \dots, y_n ~: ~ \frac{l}{n} \sum_{j=0}^{n/l-1} h(y_{jl+1}, \dots, y_{jl+ l}) > \frac{\alpha + \gamma}{2} \right\},$$
then
\begin{equation}\label{plip}
\begin{split}
 P^{\theta^*} (S^n) & = P^{\theta^*}\left\{  \sum_{j=0}^{n/l-1} \left (h(y_{jl+1}, \dots, y_{jl+ l})  - \int h p_l^{\theta^*} d\lambda^{\otimes l}\right)>\frac{n}{l} \frac{\gamma - \alpha }{2} \right\} \\
 &\leq  \exp \left(- \frac{n (\gamma - \alpha ) ^2 (\min_{i,j}Q^*_{i,j})^2 }{2 l (2-k\min_{i,j}Q^*_{i,j})^2 } \right)
 \end{split}
\end{equation}
and for all $\theta \in \Theta(\underline{q})$ such that $\int h dP^\theta - \int h p_l^{\theta^*} d\lambda^{\otimes l}   \geq \epsilon$,
\begin{equation}\label{plop}
 \begin{split}
   P^\theta((S^n)^c)
   &\leq P^{\theta}\left\{   \sum_{j=0}^{n/l-1} \left (- h(y_{jl+1}, \dots, y_{jl+ l})  + \int h p_l^{\theta} d\lambda^{\otimes l}\right) \geq \frac{n}{l} \frac{\gamma - \alpha }{2} \right\} \\
  & \leq  \exp \left(- \frac{n (\gamma - \alpha ) ^2 (\min_{i,j}Q_{i,j})^2 }{2 l (2-k\min_{i,j}Q_{i,j})^2 } \right) \leq  \exp \left(- \frac{n (\gamma - \alpha ) ^2 \underline{q}^2 }{2 l } \right),
 \end{split}
\end{equation}
using  the upper bound from the proof of Theorem 4 of \citet{GaRo12} based on Corollary 1 in \citet{Ri00}.

Using Theorem 5 of \citet{Ba88} and combining Equations \eqref{plip} and \eqref{plop},
\begin{equation*}
\begin{split}
 &P^{\theta^*}\Bigg( \pi \bigg(\Big\{\theta : \int h dP^\theta - \int h p_l^{\theta^*} d\lambda^{\otimes l}  < \epsilon \Big\}^c ~ \bigg|  ~ Y_{1:n}\bigg)\geq e^{-nr}  \text{, i.o. }  \Bigg)=0
 \end{split}
\end{equation*}
which implies that for all weak neighborhood $U$ of $P_l^{\theta^*}$,
\begin{equation*}
  P^{\theta^*}(\left( \pi(U^c|Y_{1:n})\geq \exp(-nr) \text{ i.o. }  \right)=0,
\end{equation*}
so that 
$$\mathbb{P}^{\theta^*}\left(\lim_{n \to \infty} \pi(U|Y_{1:n})=1 \right)=1 .$$

\vspace{0.5cm}

We now assume (A2) and obtain consistency for the $D_l$-pseudometric.
Let $\epsilon>0$ and let
$$U=\left\{ \theta ~ : ~D_l(\theta,\theta^*)<  \frac{2 \epsilon}{k \underline{q}}   \right\} \supset \left\{ \theta ~ : ~D_l(\theta,\theta^*)<    
 \epsilon \frac{2 - k \min_{1\leq i,j\leq k} Q_{i,j}  }{k  \min_{1\leq i,j\leq k} Q_{i,j} } \right\},$$
 be a $D_l$-neighborhood of $\theta^*$.
Let 
 $$B_n^c=\Delta^k(\underline{q}) \times \mathcal{F}_n,$$
so that
\begin{equation}\label{eqB}
\pi(B_n)= \pi_f({\mathcal{F}_n}^c)\lesssim \exp(-n r_1).
\end{equation}

In the proof of Theorem 4 of \citet{GaRo12},
 it is proved that for all $n$ large enough, there exists a test $\psi_n$ such that
 \begin{equation}\label{eqth1}
 \begin{split}
 \mathbb{E}^{\theta^*}(\psi_n) &\leq
 N\left(\frac{\epsilon}{12}, \Delta^k(\underline{q}) \times \mathcal{F}_n, D_l\right) \exp\left(-\frac{n \epsilon^2}{8l} \frac{ k^2 (\min_{i,j}Q^*_{i,j})^2}{(2-k\min_{i,j}Q^*_{i,j})^2}\right)\\
  &\leq
 N\left(\frac{\epsilon}{12}, \Delta^k(\underline{q}) \times \mathcal{F}_n, D_l\right) \exp\left(-\frac{n \epsilon^2 k^2 \underline{q}^2}{32 l}\right)
 \end{split}
 \end{equation}
\begin{equation}\label{eqtest1}
 \sup_{\theta \in  U^c \cap B_n^c} \mathbb{P}^{\theta,\mu} (1-\psi_n) \leq \exp\left( -\frac{n \epsilon^2}{32l}  \right) .
 \end{equation}

 Note that for all $\theta, \tilde{\theta}$ in $\Theta(\underline{q})$,
 \begin{equation*}
  D_l(\theta, \tilde{\theta}) \leq \lVert \mu^\theta - \mu^{\tilde{\theta}} \rVert_1 + k(l-1) \lVert Q-\tilde{Q} \rVert
   + l \max_{1 \leq j\leq k} \lVert f_j - \tilde{f_j} \rVert_{L_1(\lambda)}
 \end{equation*}
 The function $Q \to \mu^{Q}$ is continuous on the compact $\Delta^k(\underline{q})$ and 
thus 
 is uniformly continuous: there exists $\alpha>0$ such that for all 
 $\theta, \tilde{\theta}$ in $\Theta(\underline{q})$ such that $\lVert Q - \tilde{Q} \rVert <\alpha$
 then $\lVert \mu^\theta -\mu^{\tilde{\theta}} \rVert_1 < \frac{\epsilon}{36}$.
 This implies that
 \begin{equation}\label{eqth11}
 \begin{split}
  N & \left(\frac{\epsilon}{12}, \Delta^k(\underline{q}) \times \mathcal{F}_n, D_l\right) \\
  & \leq N\left(\min\bigg(\frac{\epsilon}{36k(l-1)},\alpha\bigg), \Delta^k(\underline{q}) , \lVert \cdot \rVert \right)
     N\left(\frac{\epsilon}{36l},  \mathcal{F}_n, d(\cdot,\cdot) \right)\\
  & \leq \left(\max \left(\frac{36k(l-1)}{\epsilon},\frac{1}{\alpha}\right) \right)^{k(k-1)} 
      N\left(\frac{\epsilon}{36l},  \mathcal{F}_n, d(\cdot,\cdot) \right)
  \end{split}
 \end{equation}
 Then combining Equations   
%
%
%
(\ref{eqB}), (\ref{eqth1}), (\ref{eqtest1}), (\ref{eqth11}) and using Theorem 5 of \citet{Ba88}, 
there exists $r>0$ such that
\begin{equation}\label{eqBa}
\mathbb{P}^{\theta^*}\Bigg( \pi  \left(U^c|Y_{1:n}\right)  
 \geq \exp(-nr) \text{ i.o. }  \Bigg) =0  .
\end{equation}
And Equation (\ref{eqBa}) implies that for all $\epsilon>0$,
$$\mathbb{P}^{\theta^*}\left(\lim_{n \to \infty} \pi(~ \left\{ \theta: ~ D_l(\theta,\theta^*)< \epsilon  \right\} ~ | ~ Y_{1:n})=1 \right)=1. $$


\subsection*{Proof of Theorem \ref{th2}}\label{P:th2}
Using Theorem \ref{th1}, it is sufficient to show that for all weak neighborhood $U_{f^*}$ of $f^* \lambda$ 
and neighborhood $U_{Q^*}$ of $Q^*$,
there exists a $D_3$-neighborhood $U_{\theta^*}$ of $\theta^*$ such that 
\begin{equation}\label{eqsp}
U_{\theta^*} \subset
\left\{ \exists \sigma \in \mathcal{S}_k; ~ \sigma Q \in U_{Q^*}, ~ f_{\sigma(i)} \in U_{f^*_i},  ~  i=1\dots k \right\}   .
\end{equation}
Following \citet{GaClRo13}, it is equivalent to show that for all sequences $\theta^n$ in $ \Theta(\underline{q})$
such that $D_3(\theta^n,\theta^*) \to 0$,
there exists a subsequence, that we denote again $\theta^n$, of $\theta^n$ and $\bar{\theta} \in \Theta$
such that $\lVert Q^n-\bar{Q}\rVert \to 0$, $f^n_i \lambda$ tends to $\bar{f}_i \lambda$ in the weak topology on probabilities 
for all $i\leq k$ and
$p_3^{(Q^*,f^*)} = p_3^{(\bar{Q},\bar{f})}$.

Let  $\theta^n$ in $ \Theta(\underline{q})$
such that $D_3(\theta^n,\theta^*) \to 0$.
As $\Delta^k(\underline{q})$ is a compact set, 
there exists a subsequence of $Q^{n}$ that we denote again
$Q^{n}$ which tends to $\bar{Q} \in \Delta^k(\underline{q})$. 
Writing $\mu^n$ the (sub)sequence of the stationary distribution associated to $Q_n$, then 
$\mu^{n} \to \bar{\mu}$ where $\bar{\mu}$ is the stationary distribution associated to $\bar{Q}$.
Moreover,
\begin{equation*}
\begin{split}
D_3( & \theta^{n}, \theta^*)= \lVert p_3^{\theta_n} - p_3 ^{\theta^*} \rVert_{L_1(\lambda^{\otimes 3})} \\
& \geq  \int \Big\lvert \sum_{1 \leq i_1, i_2, i_3 \leq k }\mu_{i_1}^{n} Q_{i_1,i_2}^{n} Q_{i_2,i_3}^{n} 
       f_{i_1}^{n}(y_1) f_{i_2}^{n}(y_2) f_{i_3}^{n}(y_3) -  \\
& \quad \mu_{i_1}^* Q_{i_1,i_2}^* Q_{i_2,i_3}^* 
        f_{i_1}^*(y_1) f_{i_2}^*(y_2) f_{i_3}^*(y_3) \Big\rvert ~ \lambda(dy_1)\lambda(dy_2)\lambda(dy_3) \\
& \geq - \sum_{1 \leq i_1, i_2, i_3 \leq k }\left \lvert \mu_{i_1}^{n} Q_{i_1,i_2}^{n} Q_{i_2,i_3}^{n} - 
            \bar{\mu}_{i_1} \bar{Q}_{i_1,i_2} \bar{Q}_{i_2,i_3} \right\rvert  + \\
& \quad  \int \Big\lvert \sum_{1 \leq i_1, i_2, i_3 \leq k } \bar{\mu}_{i_1} \bar{Q}_{i_1,i_2} \bar{Q}_{i_2,i_3} 
          f_{i_1}^{n}(y_1) f_{i_2}^{n}(y_2) f_{i_3}^{n}(y_3)- \\
& \qquad   \mu_{i_1}^* Q_{i_1,i_2}^* Q_{i_2,i_3}^* 
               f_{i_1}^*(y_1) f_{i_2}^*(y_2) f_{i_3}^*(y_3) \Big\rvert ~ \lambda(dy_1)\lambda(dy_2)\lambda(dy_3)
\end{split}
\end{equation*}

Since 
$\displaystyle{ \sum_{1 \leq i_1, i_2, i_3 \leq k }\Big\lvert \mu_{i_1}^{n} Q_{i_1,i_2}^{n} Q_{i_2,i_3}^{n} 
- \bar{\mu}_{i_1} \bar{Q}_{i_1,i_2} \bar{Q}_{i_2,i_3} \Big\rvert}$
tends to zero,
\begin{equation}\label{semconv}
\begin{split}
\lim_n &  \int \Big\lvert \sum_{1 \leq i_1, i_2, i_3 \leq k } \bar{\mu}_{i_1} \bar{Q}_{i_1,i_2} \bar{Q}_{i_2,i_3}    f_{i_1}^{n}(y_1) f_{i_2}^{n}(y_2) f_{i_3}^{n}(y_3)- \\
& \qquad \mu_{i_1}^* Q_{i_1,i_2}^* Q_{i_2,i_3}^* f_{i_1}^*(y_1) f_{i_2}^*(y_2) f_{i_3}^*(y_3) \Big\rvert ~ \lambda(dy_1)\lambda(dy_2)\lambda(dy_3) =0
\end{split}
\end{equation}

 Let $F^n_1, \dots, F^n_k$ be the probability distribution with respective densities $f^n_1, \dots, f^n_k$
 with respect to $\lambda$.
 Since $$\sum_{i_1,i_2,i_3} \bar{\mu}_{i_1} \bar{Q}_{i_1,i_2} \bar{Q}_{i_2,i_3}
F^n_{i_1} \otimes F^n_{i_2} \otimes F^n_{i_3}$$ 
converges in total variation, it is tight and for all $1 \leq i\leq k$,  $(F_i^n)_n$ is tight.
By Prohorov's theorem, for all $1 \leq i \leq k$ there exists a subsequence denoted
 $F^n_i$ of $F^n_i$
 which weakly converges to $\bar{F}_i$.
This in turns implies that 
$$\sum_{i_1,i_2,i_3} \bar{\mu}_{i_1} \bar{Q}_{i_1,i_2} \bar{Q}_{i_2,i_3}
F^n_{i_1} \otimes F^n_{i_2} \otimes F^n_{i_3}$$
 weakly converges to 
$$\sum_{i_1,i_2,i_3} \bar{\mu}_{i_1} \bar{Q}_{i_1,i_2} \bar{Q}_{i_2,i_3}
\bar{F}_{i_1} \otimes \bar{F}_{i_2} \otimes \bar{F}_{i_3},$$
which combined with \eqref{semconv},  leads to
\begin{equation*}
\begin{split}
\sum_{i_1,i_2,i_3} & \bar{\mu}_{i_1} \bar{Q}_{i_1,i_2} \bar{Q}_{i_2,i_3}
\bar{F}_{i_1} \otimes \bar{F}_{i_2} \otimes \bar{F}_{i_3}\\
&= \sum_{ i_1, i_2, i_3  } \mu_{i_1}^* Q_{i_1,i_2}^* Q_{i_2,i_3}^* f_{i_1}^*\lambda \otimes f_{i_2}^*\lambda \otimes f_{i_3}^*\lambda
\end{split}
 \end{equation*}

By \citet{GaClRo13}, $\bar{Q}=Q^*$, so $\bar{\mu}=\mu^*$ and 
$\bar{F}_i=f^*_i \lambda$ up to a label swapping, that is there exists a permutation $\sigma \in \mathcal{S}_k$ such that 
$\sigma\bar{Q}= Q^*$ and $\bar{F}_{\sigma(i)}= f^*_{i} \lambda$ so that Equation \eqref{eqsp} holds.

%


\subsection*{Proof of Theorem \ref{FD}}\label{P:FD}
To prove Theorem \ref{FD} we need the following lemma:
\begin{lemma}\label{lem}
Let $\epsilon>0$, for all $0<\epsilon_1<1$, $N>0$, $1 \leq j < N$ and $c>0$ such that 
$$0<\frac{\epsilon_1 2^{2N}k^N}{c(c-\epsilon_1)} < \frac{\epsilon}{3} 
\text{ and }
\frac{2 (1-\underline{q})^{N+1-j}}{\underline{q} + (1-\underline{q})^{N+1-j}}  < \frac{\epsilon}{3}.$$
If $p_N^{\theta^*}(Y_{1:N})>c$, then for all $1\leq l \leq k$ and for all $n>N$,
\begin{equation*}
\begin{split}
\bigg\{ \theta \in \Delta^k(\underline{q}) : \lVert p_N^{\theta^*}  - p_N ^{\theta} \rVert_{l_1}< \epsilon_1 , ~  \exists \sigma \in \mathcal{S}_k, ~ \lvert\mu^\theta_{\sigma(i)}-\mu^*_i\rvert < \epsilon_1 , ~  \lVert \sigma Q - Q^* \rVert < \epsilon_1 , \\  
\max_{1\leq i \leq k}\lVert f_{\sigma(i)}-f^*_i \rVert_{l_1}< \epsilon_1  \bigg\} \\
\subset \left\{ \theta  \in \Delta^k(\underline{q}) : ~ \lvert P^{\theta^*}(X_j=l ~ | ~ Y_{1:n})  - P^{\theta}(X_j=l ~ | ~ Y_{1:n}) \rvert ~ < ~ \epsilon \right\}
\end{split}
\end{equation*}
\end{lemma}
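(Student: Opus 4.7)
The plan is to decompose via triangle inequality by inserting the smoothing probabilities conditioned on $Y_{1:N}$ only, and bound the three resulting pieces using HMM forgetting for the two outer pieces and a direct parameter-closeness argument for the middle piece. Writing
\[
\mathrm{(I)} = |P^{\theta^*}(X_j=l\mid Y_{1:n}) - P^{\theta^*}(X_j=l\mid Y_{1:N})|,
\]
\[
\mathrm{(II)} = |P^{\theta^*}(X_j=l\mid Y_{1:N}) - P^\theta(X_j=l\mid Y_{1:N})|,
\]
\[
\mathrm{(III)} = |P^\theta(X_j=l\mid Y_{1:N}) - P^\theta(X_j=l\mid Y_{1:n})|,
\]
the target quantity is $\leq \mathrm{(I)}+\mathrm{(II)}+\mathrm{(III)}$. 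The two numerical conditions in the hypothesis match this trichotomy: $\frac{2(1-\underline{q})^{N+1-j}}{\underline{q}+(1-\underline{q})^{N+1-j}}<\epsilon/3$ will dominate $\mathrm{(I)}+\mathrm{(III)}$, while $\frac{\epsilon_1 2^{2N}k^N}{c(c-\epsilon_1)}<\epsilon/3$ will dominate $\mathrm{(II)}$.

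For $\mathrm{(I)}$ and $\mathrm{(III)}$, I would invoke the HMM forgetting property. Using the conditional independence $X_j\perp Y_{N+1:n}\mid X_N,Y_{1:N}$ built into the HMM graphical structure, both $P^\theta(X_j=l\mid Y_{1:n})$ and $P^\theta(X_j=l\mid Y_{1:N})$ admit the decomposition $\sum_{x_N}g_\theta(x_N)\,P^\theta(X_N=x_N\mid Y_{1:m})$ for $m=n$ and $m=N$ respectively, where $g_\theta(x_N):=P^\theta(X_j=l\mid X_N=x_N,Y_{1:N})$. Writing $\mathrm{(III)}$ as the integral of a recentered $g_\theta$ against the signed measure $P^\theta(X_N=\cdot\mid Y_{1:n})-P^\theta(X_N=\cdot\mid Y_{1:N})$ (of total variation $\leq 2$) reduces the task to controlling the oscillation $\max_{x_N,x_N'}|g_\theta(x_N)-g_\theta(x_N')|$. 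The Doeblin minorization furnished by $Q\geq\underline{q}$ makes the backward smoothing kernel geometrically mixing, and iterating over $N+1-j$ steps yields the closed-form bound $(1-\underline{q})^{N+1-j}/(\underline{q}+(1-\underline{q})^{N+1-j})$. Summing the $\theta$ and $\theta^*$ contributions gives the stated expression, hence $<\epsilon/3$.

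For $\mathrm{(II)}$, I would write each smoothing probability as a ratio $P^\theta(X_j=l\mid Y_{1:N}) = p^\theta(X_j=l,Y_{1:N})/p^\theta_N(Y_{1:N})$ and apply the elementary identity $|a/b-a'/b'|\leq |a-a'|/b + a'|b-b'|/(bb')$. The denominator is controlled pointwise: since $\lambda$ is counting measure, $|p^\theta_N(Y_{1:N})-p^{\theta^*}_N(Y_{1:N})|\leq \|p^\theta_N-p^{\theta^*}_N\|_{l_1}<\epsilon_1$, so $p^\theta_N(Y_{1:N})>c-\epsilon_1>0$. For the numerator, after first applying the permutation $\sigma$ supplied by the hypothesis so that $\mu^\theta_{\sigma(\cdot)},\sigma Q,f_{\sigma(\cdot)}$ become directly $\epsilon_1$-close to $\mu^*,Q^*,f^*$, I would expand
\[
p^{\theta}(X_j=l,Y_{1:N}) = \sum_{x_{1:N}:\,x_j=l} \mu^\theta_{x_1}\prod_{i=1}^{N-1}Q^\theta_{x_i x_{i+1}}\prod_{i=1}^N f^\theta_{x_i}(Y_i),
\]
and similarly for $\theta^*$. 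Using that in the discrete case every factor lies in $[0,1]$, the elementary bound $|\prod a_i - \prod b_i|\leq \sum|a_i-b_i|$, and the pointwise inequality $|f(Y_i)-f^*(Y_i)|\leq \|f-f^*\|_{l_1}<\epsilon_1$ together with the analogous estimates for $\mu$ and $Q$, each of the $k^{N-1}$ path contributions differs by at most $O(N\epsilon_1)$, producing a numerator bound of order $\epsilon_1 N k^{N-1}$, easily dominated by $\epsilon_1 2^{2N}k^N$. Dividing by the two denominator factors $p^\theta_N(Y_{1:N})\geq c-\epsilon_1$ and $p^{\theta^*}_N(Y_{1:N})>c$ and invoking the hypothesis yields $\mathrm{(II)}<\epsilon/3$.

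The main obstacle is the bookkeeping in $\mathrm{(II)}$: one has to handle the permutation $\sigma$ consistently so that telescoping actually pairs matching parameters on the two sides, and verify that the pointwise bounds survive the $k^{N-1}$-term combinatorial expansion (where the discrete assumption $f(y)\leq 1$ is essential since otherwise individual factors are not uniformly bounded). Once the three estimates are combined to give a total strictly less than $\epsilon$, the inclusion of the lemma follows directly.
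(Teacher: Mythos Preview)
Your proposal is correct and follows essentially the same three-term decomposition as the paper: an $\epsilon/3$ bound on (II) from parameter closeness via the ratio representation of the smoothing probability, and a forgetting bound on (I) and (III). The only cosmetic difference is that the paper applies forward forgetting (controlling the dependence of $P^{\tilde\theta}(X_{N+1}=\cdot\mid X_j=m,Y_{j:N})$ on $m$, citing Corollary~1 of \citet{Do04}) rather than your backward version via $g_\theta(x_N)$, and for (II) the paper bounds the ratio directly by replacing each factor with $(\cdot\pm\epsilon_1)$ rather than invoking the $|a/b-a'/b'|$ identity plus telescoping; both routes land on the same $\frac{\epsilon_1 2^{2N}k^N}{c(c-\epsilon_1)}$ envelope.
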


\begin{proof}[Proof of Lemma \ref{lem}]
Let $\theta  \in \Delta^k(\underline{q})$ be such that  $\lVert p_N^{\theta^*}  - p_N ^{\theta} \rVert_{l_1}< \epsilon_1$  and there exists $\sigma \in \mathcal{S}_k$ such that  $\max_{1\leq i \leq k } \lvert \mu^\theta_{\sigma(i)}-\mu^*_i\rvert < \epsilon_1$,  $\lVert \sigma Q -  Q^* \rVert < \epsilon_1$ and
 $\max_{1\leq i \leq k}\lVert f_{\sigma(i)}-f^*_{i} \rVert_{l_1}< \epsilon_1 $.

To bound $\lvert P^{\theta^*}(X_j=l ~ | ~ Y_{1:n})  - P^{\theta}(X_j=l ~ | ~ Y_{1:n}) \rvert $, we now prove that it is sufficient to bound $\lvert P^{\theta^*}(X_j=l ~ | ~ Y_{1:N})  - P^{\theta}(X_j=l ~ | ~ Y_{1:N}) \rvert $ with $N<n$ a well chosen fixed integer thanks to the exponential forgetting of the HMM. Let $1 \leq a \leq k$,

\begin{equation}\label{lem1}
\begin{split}
\lvert & P^{\theta^*}(X_j=l ~ | ~ Y_{1:n})  - P^{\theta}(X_j=l ~ | ~ Y_{1:n}) \rvert  \\
& \leq A_{\theta^*} + \lvert P^{\theta^*}(X_j=l ~ | ~ Y_{1:N})  - P^{\theta}(X_j=l ~ | ~ Y_{1:N}) \rvert + A_{\theta},
\end{split}
\end{equation}

where for $\tilde{\theta}\in\{\theta, \theta^*\}$,
\begin{equation*}
\begin{split} A_{\tilde{\theta}} =
&  \bigg\lvert \frac{P^{\tilde{\theta}}(Y_{1:N},X_j=l)   \sum\limits_{1 \leq b \leq k} P^{\tilde{\theta}} (Y_{N+1:n}~ | X_{N+1}=b) P^{\tilde{\theta}}( X_{N+1}=b | X_j=l,Y_{j:N} )}{\sum\limits_{1\leq m \leq k} P^{\tilde{\theta}}(Y_{1:N},X_j=m)   \sum\limits_{1 \leq b \leq k} P^{\tilde{\theta}} (Y_{N+1:n}~ | X_{N+1}=b) P^{\tilde{\theta}}( X_{N+1}=b | X_j=m , Y_{j:N})} -  \\
&  \frac{P^{\tilde{\theta}}(Y_{1:N},X_j=l)   \sum\limits_{1 \leq b \leq k} P^{\tilde{\theta}} (Y_{N+1:n}~ | X_{N+1}=b) P^{\tilde{\theta}}( X_{N+1}=b | X_j=a, Y_{j:N} )}{\sum\limits_{1\leq m \leq k} P^{\tilde{\theta}}(Y_{1:N},X_j=m)   \sum\limits_{1 \leq b \leq k} P^{\tilde{\theta}} (Y_{N+1:n}~ | X_{N+1}=b) P^{\tilde{\theta}}( X_{N+1}=b | X_j=a , Y_{j:N} )}  
 \bigg\rvert.
\end{split}
\end{equation*}

Using Corollary 1 of \citet{Do04}, i.e. the exponential forgetting of the HMM, we obtain for all $(\omega,m) \in \{1, \dots, k\}^2$,
\begin{equation*}
\begin{split}
& \left \lvert P^{\tilde{\theta}}( X_{N+1}=b | X_j=m , Y_{j:N} ) -  P^{\tilde{\theta}}( X_{N+1}=b | X_j=\omega , Y_{j:N} ) \right \rvert \\
& ~ \leq  (1-\underline{q})^{N+1-j}   \leq 
(1-\underline{q})^{N+1-j}  \frac{ P^{\tilde{\theta}}( X_{N+1}=b | X_j=\omega , Y_{j:N} )}{\underline{q}}
\end{split}
\end{equation*}
 so that for  $\tilde{\theta}\in\{\theta, \theta^*\}$,  
\begin{equation}
\label{lem2}A_{\tilde{\theta}}  \leq \frac{2 (1-\underline{q})^{N+1-j}}{\underline{q} + (1-\underline{q})^{N+1-j}}.
\end{equation}

Moreover,
\begin{equation*}
\begin{split}
& P^{\theta^*}(X_j=l ~ | ~ Y_{1:N})  - P^{\theta}(X_j=l ~ | ~ Y_{1:N})  \\
& =  \frac{\sum\limits_{a_{1:j-1},a_{j+1:N}}\mu^*_{a_1}Q^*_{a_1,a_2}  \dots Q^*_{a_{j-1},l}Q^*_{l,a_{j+1}} \dots Q^*_{a_{N-1},a_N}f^*_{a_1}(Y_{a_1})\dots f^*_l(Y_j)\dots f^*_{a_N}(Y_N)}{p^{\theta^*}_N(Y_{1:N})}   \\
& - \frac{\sum\limits_{a_{1:j-1},a_{j+1:N}}\mu^{}_{a_1}Q_{a_1,a_2}  \dots Q_{a_{j-1},l}Q_{l,a_{j+1}} \dots Q_{a_{N-1},a_N}f_{a_1}(Y_{a_1})\dots f_l(Y_j)\dots f_{a_N}(Y_N)}{p^{\theta}_N(Y_{1:N})}     \\
& \leq \frac{(1+\epsilon_1/c)  \sum\limits_{a_{1:j-1},a_{j+1:N}}\mu^*_{a_1} \dots f^*_{a_N}(Y_N)
-\sum\limits_{a_{1:j-1},a_{j+1:N}}\mu^{}_{a_1}\dots f_{a_N}(Y_N)}{(1+\epsilon_1/c)p_N^{\theta^*}(Y_{1:N})}\\
& \leq \frac{(1+\epsilon_1/c)  \sum\limits_{a_{1:j-1},a_{j+1:N}}\mu^*_{a_1} \dots f^*_{a_N}(Y_N)
-\sum\limits_{a_{1:j-1},a_{j+1:N}}(\mu^{}_{a_1}-\epsilon_1)\dots (f_{a_N}(Y_N)-\epsilon_1)}{c+\epsilon_1}   \\
& \leq  \frac{\max(\epsilon_1,\epsilon_1/c) \sum\limits_{a_{1:j-1},a_{j+1:N}} 2^{2N}  }{c+ \epsilon_1}
\leq \frac{\epsilon_1 2^{2N}k^N}{c(c+\epsilon_1)}.
\end{split}
\end{equation*}
Similarly
\begin{equation*}
P^{\theta}(X_j=l ~ | ~ Y_{1:N})  - P^{\theta^*}(X_j=l ~ | ~ Y_{1:N}) \leq \frac{\epsilon_1 2^{2N}k^N}{c(c-\epsilon_1)}
\end{equation*}
so that 
\begin{equation}\label{lem3}
 \big\lvert P^{\theta^*}(X_j=l ~ | ~ Y_{1:N})  - P^{\theta}(X_j=l ~ | ~ Y_{1:N}) \big\rvert \leq \frac{\epsilon_1 2^{2N}k^N}{c(c-\epsilon_1)}.
\end{equation}
Combining Equations (\ref{lem1}), (\ref{lem2}) and \eqref{lem3}, we obtain
\begin{equation*}
\begin{split}
\lvert  &P^{\theta^*}(X_j=l ~ | ~ Y_{1:n})  - P^{\theta}(X_j=l ~ | ~ Y_{1:n}) \rvert  \\
&\leq 2 \frac{2 (1-\underline{q})^{N+1-j}}{\underline{q} + (1-\underline{q})^{N+1-j}} + \frac{\epsilon_1 2^{2N}k^N}{c(c-\epsilon_1)} < \epsilon. 
\end{split}
\end{equation*}
\end{proof}

We prove Theorem \ref{FD} for $m=1$, one may easily generalizes the proof.
Let $\beta>0$, $j>0$ and $\epsilon>0$, we fix $N$ and $c>0$ such that
 $$\frac{2 (1-\underline{q})^{N+1-j}}{\underline{q} + (1-\underline{q})^{N+1-j}}<\frac{\epsilon}{3}
 \text{ and }
 P^{\theta^*}\big(p_N^{\theta^*}(Y_{1:N})>c\big)>\sqrt{1-\beta}$$
 then we choose $\epsilon_1$ such that
$$0< \frac{\epsilon_1 2^{2N}k^N}{c(c-\epsilon_1)} < \frac{\epsilon}{3}.$$

Posterior consistency for the marginal distribution in $l_1$ and for all components of the parameter i.e. Theorems \ref{th1} and \ref{th2} imply that there exists $M$ such that $P^{\theta^*}$-a.s., for all $n \geq M$,
$$\pi\left( \{ \theta ~: ~ D_N(\theta,\theta^*) \}<\epsilon_1   ~ \big| ~ Y_{1:n} \right)>\frac{\sqrt{1-\beta}+1}{2} $$
and
\begin{equation*}
\begin{split}
 \pi\bigg( \{ \theta ~: ~ \exists \sigma \in \mathcal{S}_k, ~ \max_{1\leq i\leq k} \lvert \mu_{\sigma(i)} &- \mu^*_i \rvert < \epsilon_1, ~  \lVert \sigma Q - Q^*\rVert<\epsilon_1, ~\\
 &\max_{1 \leq i \leq k} \lVert f_{\sigma(i)} -f^*_i \rVert_{l_1} \}<\epsilon_1 ~ \bigg| ~ Y_{1:n} \bigg)>\frac{\sqrt{1-\beta}+1}{2} 
\end{split}
\end{equation*}
 so that for all $n\geq \max(N,M)$,
\begin{equation*}
\begin{split}
\mathbb{E}^{\theta^*} & \left( \pi \bigg(\left\lvert P^\theta(X_j=l ~|Y_{1:n}) - P^{\theta^*}(X_j=l ~|~Y_{1:n})\right\rvert <\epsilon   |Y_{1:n}\bigg)   \right) \\
& \geq \mathbb{E}^{\theta^*}  \left( \mathds{1}_{p^{\theta^*}_N(Y_{1:N})>c} \pi \bigg( \left\lvert P^\theta(X_j=l ~|Y_{1:n}) - P^{\theta^*}(X_j=l ~|~Y_{1:n})\right\rvert <\epsilon   |Y_{1:n}\bigg)   \right) \\
& \geq 1 -\beta.
\end{split}
\end{equation*}
Then for all $\alpha>0$,
\begin{equation*}
\begin{split}
P^{\theta^*} &\left( \pi\bigg(
\left\lvert  P^\theta(X_j=l ~|Y_{1:n}) - P^{\theta^*}(X_j=l ~|~Y_{1:n})\right\rvert <\epsilon   |Y_{1:n}\bigg)  < 1 -\alpha \right) \\
& \leq \frac{1}{\alpha} \left( 1- \mathbb{E}^* \left(  \pi\bigg(
\left\lvert  P^\theta(X_j=l ~|Y_{1:n}) - P^{\theta^*}(X_j=l ~|~Y_{1:n})\right\rvert <\epsilon   |Y_{1:n}\bigg)  \right)\right) \\
& \to 0.
\end{split}
\end{equation*}


\subsection*{Proof of Proposition \ref{thD}}\label{P:thD}
Note that for all $1 \leq  i \leq k$,
\begin{equation*}
\begin{split}
\int_{\mathcal{F}^k} & \sum_{l=1}^{+\infty} f^*_i(l) \max_{1\leq j \leq k} \left( -\log(f_j(l)) \right) (DP(\alpha G_0))^{\otimes k}(df) \\
&\leq  \sum_{l=1}^{+\infty} f^*_i(l) \sum_{1\leq j \leq k} \int_{\mathcal{F}^k}\left( -\log(f_j(l)) \right) (DP(\alpha G_0))^{\otimes k}(df)\\
& \lesssim \sum _{l=1}^{+\infty} \frac{f^*_i(l)}{\alpha G_0(l)}
 \end{split}
\end{equation*}
so that using Assumption (E1), 
\begin{equation*}
\begin{split}
\big(DP(\alpha G_0)\big)^{\otimes k} & \bigg(f_1, \dots ,f_k ~ : ~ \forall 1\leq i \leq k, \\
 & \qquad \sum_{l=1}^{+\infty} f_i^*(l)\max_{1 \leq j \leq k} (-\log(f_j(l)))<+\infty \bigg)=1.
\end{split}
\end{equation*}
Note that for all $\epsilon>0$, 
\begin{equation*}
\begin{split}
 &  \left\{f_1, \dots ,f_k ~ : ~ \forall 1\leq i \leq k,  ~ \sum_{l=1}^{+\infty} f_i^*(l)\max_{1 \leq j \leq k} (-\log(f_j(l)))<+\infty \right\} \\
& \subset  
\bigcup_{N \in \mathbb{N}}  \left\{ f_1, \dots ,f_k ~ : ~ \forall 1\leq i \leq k,  ~ \sum_{l=N}^{+\infty} f_i^*(l)\max_{1 \leq j \leq k} (-\log(f_j(l)))< \epsilon \right\},
\end{split}
\end{equation*}
thus arguing by contradiction, for all $\epsilon>0$,
 there exists $L_\epsilon$ such that 
 \begin{equation*}
 \begin{split}
 \big(DP(\alpha G_0)\big)^{\otimes k} & \bigg(f_1, \dots ,f_k ~ : ~ \forall 1\leq i \leq k, \\
 &\qquad \sum_{l>L_\epsilon} f_i^*(l)\max_{1 \leq j \leq k} (-\log(f_j(l)))<\epsilon \bigg)>0.
 \end{split}
 \end{equation*}
 
 Using the tail free property of the Dirichlet process, for all $ 1 \leq j \leq k$,
 $$ \sum_{l>L_\epsilon} f_i^*(l)\max_{1 \leq j \leq k} (-\log(f_j(l)))<\epsilon  $$
   and
   \begin{equation}\label{eqind}
\left(\frac{f_j(1)}{f_j(l \leq L_\epsilon)}, \dots, \frac{f_j(L_\epsilon)}{f_j(l \leq L_\epsilon)} \right)
\end{equation}
are independent given $f_j(l>L_\epsilon)$ and
\eqref{eqind} given $f_j(l>L_\epsilon)$ has a Dirichlet distribution with parameter $(\alpha G_0(1), \dots, \alpha G_0(L_\epsilon))$.
 Then for all $\epsilon>0$, there exists $L_\epsilon$ such that for all $\delta \in (0,1)$,
 \begin{equation}\label{eqE4}
 \begin{split}
  \big(DP(\alpha G_0)\big)^{\otimes k} & \bigg(f_1, \dots ,f_k ~ : ~ \forall 1 \leq i \leq k,
   ~  \sum_{l>L_\epsilon} f_i^*(l)\max_{1 \leq j \leq k} (-\log(f_j(l)))<\frac{\epsilon}{2},\\ 
 & \qquad  \forall l \leq L_\epsilon, 
  ~ \lvert f_j(l)- f^*_j(l) \rvert \leq c\delta \bigg) >0
  \end{split}
  \end{equation}
  where $ c = \min_{1 \leq i \leq k} \min_{l \leq L_\epsilon, f^*_i(l)>0} f^*_i(l)$.
  
  For all $f_1, \dots, f_k$ such that for all $1 \leq i \leq k$,
  $$\sum_{l>L_\epsilon} f_i^*(l)\max_{1 \leq j \leq k} (-\log(f_j(l)))<\frac{\epsilon}{2}$$
  and for all $l \leq L_\epsilon$, 
  $ \lvert f_i(l)- f^*_i(l) \rvert \leq c\delta $,
(A1e) holds and
  \begin{equation}\label{eqE3}
  \begin{split}
  \sum_{l \in \mathbb{N}} & f^*_i(l) \max_{1\leq j \leq k}\log\left(\frac{f^*_j(l)}{f_j(l)} \right) \\
  & = \sum_{l\leq L_\epsilon} f^*_i(l) \max_{1\leq j \leq k}\log\left(\frac{f^*_j(l)}{f_j(l)} \right)
  + \sum_{l > L_\epsilon} f^*_i(l) \max_{1\leq j \leq k}\log(f^*_j(l)) \\
 & \quad + \sum_{l > L_\epsilon} f^*_i(l) \max_{1\leq j \leq k}(-\log(f_j(l)))\\
 & \leq   \frac{\delta}{1-\delta} + 0 + \frac{\epsilon}{2}  \leq \epsilon
  \end{split}
  \end{equation}
  for $\delta$ small enough.
  For such a $\delta$ denote
  \begin{equation*}
  \begin{split}
  \Theta_\epsilon = \{ Q ~ : ~ \lVert Q-Q^* \rVert \leq \epsilon \} \times 
  \{f_1, \dots ,f_k ~ : ~  \sum_{l>L_\epsilon} f_i^*(l)\max_{1 \leq j \leq k} (-\log(f_j(l)))<\frac{\epsilon}{2}, \\
  \forall l \leq L_\epsilon, 
  ~ \lvert f_j(l)- f^*_j(l) \rvert \leq c\delta \} 
  \end{split}
  \end{equation*}
   Using Equation (\ref{eqE3}), (A1b) holds.
  Moreover 
$$\sum_{i=1}^k \sum_{l \in \mathbb{N}} f^*_i(l) \left\lvert  \log\left( \sum_{j=1}^k f_j(l)\right) \right\rvert
\leq \sum_{i=1}^k \sum_{l\in \mathbb{N}} f^*_i(l) \left(\log(k) - \log\left(\min_{1\leq j \leq k} f_j(l)\right)\right) <+\infty $$
so that (A1e) holds.
Furthermore (A1d) and (A1c) are obviously checked.
 Using  the assumption that $Q^*$ is in the support of $\pi_Q$, (A1a) is checked. Then using Equation \eqref{eqE4},  (A1) holds and the first part of Proposition \ref{thD} follows.

We now prove the second part of Proposition \ref{thD}.
We first give a representation of a discrete Dirichlet process with independent Gamma distributed random variables.

\begin{lemma}\label{LDir}
 Let $(Z_l)_{l \in \mathbb{N}}$ be independent random variables such that 
  for all $ l \in \mathbb{N} $,
 $$Z_l \sim \Gamma(\alpha G_0(l),1),$$
 then  $\sum_{l=1}^L Z_l$ converges almost surely and its limit has a gamma distribution  $\Gamma(\alpha,1)$.
\newline
Moreover denote
$$ f : \left\{ \begin{array}{ll}
\mathbb{N} &\to [0,1]\\
i &\to f(i)=Z_i/(\sum_{l=1}^{+\infty} Z_l)
\end{array}\right.  ,$$
then $f$ is distributed from a Dirichlet process $DP(\alpha G_0)$.
\end{lemma}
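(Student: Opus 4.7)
The proof splits naturally into two pieces: convergence with the Gamma limit, then the Dirichlet process identification.

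For the first claim, the plan is to exploit that $Z_l \ge 0$, so the partial sums $S_L = \sum_{l=1}^L Z_l$ are monotone nondecreasing and hence converge almost surely (possibly to $+\infty$) to some limit $S$. By independence and the convolution property of Gamma distributions with common scale, $S_L \sim \Gamma(\alpha \sum_{l=1}^L G_0(l), 1)$, whose Laplace transform is $(1+t)^{-\alpha \sum_{l=1}^L G_0(l)}$ for $t \ge 0$. Since $\sum_{l=1}^L G_0(l) \uparrow 1$ as $L \to \infty$, monotone convergence gives $\mathbb{E}[e^{-tS}] = (1+t)^{-\alpha}$, which is the Laplace transform of $\Gamma(\alpha,1)$. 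In particular $S < \infty$ almost surely, so the series converges a.s. and the limit is $\Gamma(\alpha,1)$-distributed.

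For the second claim, the plan is to verify the defining finite-dimensional distributions of $DP(\alpha G_0)$. Recall that a random probability $P$ on $\mathbb{N}$ is $DP(\alpha G_0)$ iff for every finite measurable partition $(A_1,\ldots,A_m)$ of $\mathbb{N}$, $(P(A_1),\ldots,P(A_m))$ is $\mathrm{Dirichlet}(\alpha G_0(A_1),\ldots,\alpha G_0(A_m))$. Given such a partition, set $W_j = \sum_{l \in A_j} Z_l$. By independence of the $Z_l$ and the first part of the lemma applied to each index subset, the $W_j$ are independent with $W_j \sim \Gamma(\alpha G_0(A_j),1)$. Then $f(A_j) = W_j/S$ with $S = \sum_{j=1}^m W_j$, and the classical Gamma-to-Dirichlet representation yields that $(W_1/S,\ldots,W_m/S) \sim \mathrm{Dirichlet}(\alpha G_0(A_1),\ldots,\alpha G_0(A_m))$, independent of $S$. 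This matches the required finite-dimensional distribution, so $f$ has law $DP(\alpha G_0)$.

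The main subtlety I expect is the step where a partition class $A_j$ is infinite: the independence of $W_j$ across $j$ follows from the independence of the $Z_l$, but one must justify $W_j < \infty$ a.s.\ and the identification $W_j \sim \Gamma(\alpha G_0(A_j),1)$ by invoking the first part on the subfamily $(Z_l)_{l \in A_j}$ with total mass $\sum_{l \in A_j} \alpha G_0(l) = \alpha G_0(A_j)$. Once this is in place, the usual closed-form density computation for $(W_1/S,\ldots,W_{m-1}/S, S)$ (change of variables, with the Jacobian producing the Dirichlet density and a $\Gamma(\alpha,1)$ factor for $S$) finishes the argument; no further difficulty is anticipated.
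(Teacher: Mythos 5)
Your proposal is correct, and its overall structure matches the paper's proof: establish that the total sum is almost surely finite with a $\Gamma(\alpha,1)$ limit, then identify the law of $f$ through its finite-dimensional distributions on partitions via the classical Gamma-to-Dirichlet construction. The second half is essentially identical to the paper's argument (independence of the block sums $\sum_{l\in B_j} Z_l$ over disjoint index sets, each $\Gamma(\alpha G_0(B_j),1)$ by the first part, then normalization gives the Dirichlet vector). The only genuine difference is in the first half: the paper computes the characteristic function of the partial sums and then invokes the equivalence of convergence in law and almost sure convergence for series of independent random variables (citing Durrett), whereas you exploit nonnegativity of the $Z_l$ so that the partial sums are monotone and converge a.s.\ in $[0,\infty]$ automatically, and then identify the limit and rule out $+\infty$ via the Laplace transform $(1+t)^{-\alpha}$ and monotone convergence. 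Your route is slightly more elementary, since it avoids the L\'evy-type equivalence theorem altogether; the paper's route is marginally shorter to state once that theorem is taken as known. Your remark about infinite partition classes (applying the first part to the subfamily $(Z_l)_{l\in A_j}$ to get $W_j\sim\Gamma(\alpha G_0(A_j),1)$, $W_j<\infty$ a.s.) is exactly the justification the paper leaves implicit, so no gap remains.
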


\begin{proof}[Proof of Lemma \ref{LDir}]
First for all $t\in \mathbb{R}$
\begin{equation*}
\begin{split}
\lim_{L\to \infty} \mathbb{E}\left( \exp\left( i t \sum_{l=1}^L Z_l \right)\right) &
 = \lim_{L\to \infty} \prod_{l=1}^L \mathbb{E}(\exp(it Z_l)) 
 = \lim_{L\to \infty} \prod_{l=1}^L  (1-it)^{-\alpha G_0(l)} \\
& = (1-it)^{-\alpha},
\end{split}
\end{equation*}
thus $\sum_{l} Z_l$ converges in law and equivalently almost surely (see Section 9.7.1 in \citet{Du}) 
and is distributed from a gamma distribution  $\Gamma(\alpha,1)$.
\newline
Let $\{B_1, \dots, B_M\}$ be a partition of $\mathbb{N}$,
\begin{equation*}
\begin{split}
(f(B_1),\dots, f(B_M))
&= \left(\frac{\sum_{l \in B_1} Z_l}{ \sum_{l\in \mathbb{N}} Z_l}, \dots ,\frac{ \sum_{l \in B_M}Z_l}{\sum_{l \in \mathbb{N}} Z_l}\right)\\
& \sim Dir((\alpha G_0(B_1), \dots, \alpha G_0(B_M)))
\end{split}
\end{equation*}
since $\left(\sum_{l \in B_1} Z_l, \dots ,\sum_{l \in B_M}Z_l\right)$ are independent random variables and
for all $1\leq i \leq M$, $$\sum_{l \in B_i} Z_l \sim \Gamma(\alpha G_0(B_i),1). $$
Finally $f$ is drawn from a Dirichlet process $DP(\alpha G_0)$.
\end{proof}

We assume (A1b) i.e. for all $\epsilon>0$,
$$ DP(\alpha G_0)^{\otimes k }\left(\left\{f \in \mathcal{F}^k , \forall i \in \{1, \dots, k\} ~ \sum_{l \in \mathbb{N}} f^*_i(l)\max_{1\leq j \leq k} \log \frac{f_j^*(l)}{f_j(l)}<\epsilon\right\}\right)>0.$$
Let  $\epsilon>0$,
 define $\mathcal{F}_\epsilon$ as the set of $f=(f_1, \dots, f_k)$ such that for all $1\leq i \leq k$, for all $f\in \mathcal{F}_\epsilon$,
$$\sum_{l \in \mathbb{N}} f^*_i(l) \log\left( \frac{f^*_i(l)}{f_i(l)}\right)< \epsilon.$$
Then $DP(\alpha G_0)^{\otimes k }(\mathcal{F}_\epsilon)>0$.

Since $\sum_l f^*_i(l) (-\log f^*_i(l))$ converges,
then $\sum_l f^*_i(l) (-\log f_i(l))$ converges.
Using Lemma \ref{LDir}, we can write $f_i$ with independent gamma distributed random variables $(Z_l)_{l\in \mathbb{N}}$: 
$$f_i(l)=\frac{Z_l}{\sum_{j\in \mathbb{N}} Z_j},$$
where $Z_l \sim \Gamma(\alpha G_0(l),1)$.
Then $\sum_{l\in \mathbb{N}} f^*_i(l) (-\log(Z_l))$ converges since $\sum_{j \in \mathbb{N}} Z_j$ is finite almost surely.
Since $DP(\alpha G_0)^{\otimes k }(\mathcal{F}_\epsilon)>0$, for all $1\leq i \leq k$  with positive probability , 
$$\sum_{l\in \mathbb{N}} f^*_i(l) (-\log(Z_l))$$
 converges.
 Using the Kolmogorov
$0$-$1$ law and the Three-Series Theorem (see Section 9.7.3 in \citet{Du}), 
$\sum_{l\in \mathbb{N}} f^*_i(l) (-\log(Z_l))$
 converges almost surely and 
\begin{alignat}{2}
&\sum_{l\in \mathbb{N}} \mathbb{P}(\lvert f^*_i(l) (-\log(Z_l))  \rvert >1) <+ \infty \label{seq1} \\
&\sum_{l\in \mathbb{N}} \mathbb{E}\big(f^*_i(l) (-\log(Z_l))  \mathds{1}_{\lvert f^*_i(l) (-\log(Z_l)) \rvert \leq 1}\big) <+ \infty \label{seq2} \\
&\sum_{l\in \mathbb{N}} \text{var}\big(f^*_i(l) (-\log(Z_l))  \mathds{1}_{\lvert f^*_i(l) (-\log(Z_l))\rvert  \leq 1}\big) <+ \infty .
\end{alignat}
Equation \eqref{seq1} implies that
\begin{equation*}
\begin{split}
+ \infty & > \sum_{l\in \mathbb{N}} \mathbb{P}(\lvert f^*_i(l) (-\log(Z_l))  \rvert >1) \\
   &\geq \sum_{l\in \mathbb{N}} \frac{1}{\Gamma(\alpha G_0(l))}   \int_0^{\exp(-1/f^*_i(l))} x^{\alpha G_0(l)-1} e^{-x} dx \\
& \geq \sum_{l\in \mathbb{N}} \frac{1}{\alpha G_0(l) \Gamma(\alpha G_0(l))}    \exp\left(-\exp\left(\frac{-1}{f^*_i(l)}\right)  -\frac{\alpha G_0(l) }{f^*_i(l)}\right) \\
& \gtrsim  \sum_{l\in \mathbb{N}}     \exp\left(-\frac{\alpha G_0(l) }{f^*_i(l)}\right).
\end{split}
\end{equation*}
Then $$\lim_l \frac{f^*_i(l)}{G_0(l)} = 0.$$
Moreover Equation \eqref{seq2} implies that
\begin{equation*}
\begin{split}
+ \infty & > \sum_l \mathbb{E}\big(f^*_i(l) (-\log(Z_l))  \mathds{1}_{\lvert f^*_i(l) (-\log(Z_l)) \rvert\leq 1}\big) \\
   & \geq \sum_l \bigg( \int_{\exp(-1/f^*_i(l))}^{1} \frac{1}{\Gamma(\alpha G_0(l))}  f^*_i(l) (-\log(x))  x^{\alpha G_0(l)-1} e^{-x} dx  \\
     & \qquad   \qquad        + \int_{1}^{\exp(1/f^*_i(l))} \frac{1}{\Gamma(\alpha G_0(l))}  f^*_i(l) (-\log(x))  x^{\alpha G_0(l)-1} e^{-x} dx \bigg)  \\
 & \geq \sum_l  \bigg(  \frac{e^{-1} f^*_i(l)}{\Gamma(\alpha G_0(l)}  \int_{\exp(-1/f^*_i(l))}^{1}   (-\log(x))  x^{\alpha G_0(l)-1} dx \\
    & \qquad \qquad  -\frac{1}{\Gamma(\alpha G_0(l))} \int_{1}^{\exp(1/f^*_i(l))} e^{-x}dx \bigg)  \\
 &\gtrsim   - \alpha   +  \sum_l  \frac{e^{-1} f^*_i(l)}{\alpha^2 G_0^2(l)\Gamma(\alpha G_0(l))} 
           \\
& \qquad \qquad \left(  1 - \exp\left(- \frac{\alpha G_0(l) }{f^*_i(l)}\right)   -  \frac{\alpha G_0(l) }{f^*_i(l)} \exp\left(- \frac{\alpha G_0(l) }{f^*_i(l)}\right)  \right) \\
& \gtrsim  - \alpha +  \sum_l \frac{f^*_i(l)}{G_0(l)}  
\end{split}
\end{equation*}
so that $\sum_l \frac{f^*_i(l)}{G_0(l)}   < + \infty$.


\section{Other proofs}\label{op}


\subsection*{Proof of Proposition \ref{th:MG}}
The proof uses many ideas of \citet{To06}.
 
We now prove that Assumptions (B1), (B2), (B3), (B4) and (B5) imply (A1). 
 A reproduction of the proof of Theorem 3.2. and Lemma 3.1 of \citet{To06} shows that Assumptions (B2), (B3), (B4) and (B5) imply that 
 for all $\epsilon>0$, for all $1 \leq j \leq k$ there exists a weak neighborhood $V_j$
 of a compactly supported probability $\tilde{P}_j$ such that for all $f_j=\phi * P_j$, $P_j\in V_j$,
 \begin{equation}\label{th51:KL}
  \int_{\mathbb{R}} f^*_i(y) \max_{1\leq j \leq k} \log\left(\frac{f^*_j(y)}{f_j(y)} \right)\lambda(dy) < \epsilon .
 \end{equation}
Let 
$0<\underline{\sigma}<\bar{\sigma}$ and $\zeta>0 $ be
such that for all $1\leq j \leq k$
$$\tilde{P}_j([-\zeta,\zeta] \times [\underline{\sigma},\bar{\sigma}])=1.$$
Let $\delta=\underline{\sigma}/2$.
 For all $1\leq j\leq k$ define
$$U_j=\{P ~ : ~ \left\lvert \int_{\mathbb{R} \times (0,+\infty)} \xi dP - 
\int_{\mathbb{R} \times(0,+\infty)} \xi d\tilde{P}_j \right\rvert <\epsilon \},$$ 
where $\xi: \mathbb{R} \times (0,+\infty) \to [0,1]$
is a piecewise affine continuous function such that 
$\xi(z,\sigma)=1$ for all $z \in [-\zeta,\zeta]$
and 
$\sigma \in   [\underline{\sigma},\bar{\sigma}]$
and $\xi(z,\sigma)=0$ for all $z \in [-\zeta-\delta,\zeta+\delta]^c$
and 
$\sigma \in   [\underline{\sigma} - \delta,\bar{\sigma} + \delta]^c$.
For all $\epsilon>0$, define  
$$\Theta_{\epsilon}=
\{ Q ~:~ \lVert Q - Q^* \rVert <\epsilon  \} \times (V_1 \cap U_1) \times \dots \times (V_k \cap U_k) .$$
Then for all $(Q,\phi*P_1, \dots, \phi*P_k) \in \Theta_\epsilon$, (A1b) is true according to Equation (\ref{th51:KL}).
In addition, for all $y\in \mathbb{R}$,
\begin{equation*}\label{th51:minor}
\begin{split}
 f_j(y)& \geq \int_{[-\zeta- \delta,\zeta+ \delta] \times [\underline{\sigma}-\delta,\bar{\sigma}+\delta]}
                         \phi_{\sigma}(y-z)P_j(dz,d\sigma)\\
       & \geq \frac{1}{\bar{\sigma} + \delta} ~ \phi_{\underline{\sigma} - \delta} 
       \big(\max(\lvert y-\xi-\delta \rvert,\lvert y+\xi+\delta \rvert)\big) ~ (1-\epsilon)
\end{split}
\end{equation*}
which implies (A1c).
Moreover using assumption (B1), $\Pi_P$-a.s. there exists $C>0$ such that for all $1\leq j \leq k$,
$$ f_j(y) \leq \int \frac{1}{\sigma} P_j(dz,d\sigma) \leq C.$$
Then 
\begin{equation*}
\begin{split}
  &\left\lvert  \log  \left( \frac{1}{k} \sum_{j=1}^k f_j(y)  \right) \right\rvert \\
 &   \leq  \lvert \log(C) \rvert  + \lvert  \log(\bar{\sigma}+\delta) \rvert
 - \log(1 - \epsilon) +  \frac{(\max( y-\xi-\delta , y+\xi+\delta ))^2}{2 (\underline{\sigma}-\delta)^2}
 \end{split}
\end{equation*}
which implies (A1e) under (B5).
Furthermore (B1) implies (A1d).
As $\Theta_\epsilon$ is a product of neighborhoods of elements in the support of their respective prior,
$\pi(\Theta_\epsilon)>0$, so (A1) is checked.

Now we prove that Assumption (B6) implies Assumption (A2).
Let $\delta>0$. For all $a,l,u, \kappa >0$, such that $l<u$ denote 
$\mathcal{F}^{\kappa}_{a,l,u}= \{\phi*P ~ : ~ P((-a,a]\times(l,u])>1- \kappa \} $.
Using Section 4 of \citet{To06}, there exist $b_0,b_1,b_2$ only depending on $\kappa$ such that
\begin{equation}
\begin{split}
 \log(N(3\kappa,(\mathcal{F}^{\kappa}_{a,l,u})^k,d))& \leq k \log(N(3\kappa,\mathcal{F}^{\kappa}_{a,l,u},\lVert \cdot \rVert_{L_1(\lambda)}))\\
 &\leq k b_0 \left(b_1 \frac{a}{l} + b_2 \log\left(\frac{u}{l} \right) +1 \right)
 \end{split}
\end{equation}
Choosing $\kappa=\frac{\delta}{3*36l} $ and $\beta<\frac{\delta^2 k \underline{q}^2}{32lb_0(b_1+b_2)}$, assumption (B6) shows that assumption (A2) holds.


\subsection*{Proof of Corollary \ref{th3}}\label{P:th3}
 By repeating the proof of Theorem \ref{th2} and using the result of identifiability of Theorem 2.1 of \citet{GaRo13} , if $\lim_{n\to\infty} D_3(\gamma^n,\gamma^*)=0$, there 
 exists  a subsequence of $\gamma_n$, which we also denote $\gamma_n$, such that
 $Q^n$ tends to $Q^*$ and for all $1\leq j \leq k$,
 $g^n(\cdot-m^n_j)\lambda$ weakly tends to $g^*(\cdot-m^*_{j})\lambda$.
 Particularly $ g^n(\cdot)\lambda$ weakly tends to $g^*(\cdot)\lambda$.
 These weak convergences imply the pointwise convergence of the characteristic functions.
 As for all $t\in \mathbb{R}$,
 $$\int e^{ity}g^n(y-m^n_j)d\lambda(y) = e^{itm_j^n} \int e^{ity} g^n(y) d\lambda(y)$$
 then $\lim_{n\to \infty} e^{itm_j^n}= e^{itm^*_{j}}$ for all $t$ 
 such that $\int e^{ity}g^*(y) d\lambda(y) \neq 0$.
 As any characteristic function is uniformly continuous and equal to $1$ at $0$, 
 there exists $\alpha>0$ such that $\int e^{ity}g^*(y) d\lambda(y) \neq 0$ for all $\lvert t  \rvert<\alpha$. 
 Thus for all $1\leq j \leq k$, 
 $\lim_{n \to \infty} m^n_j=m^*_j$.
This implies the first part of Corollary \ref{th3}.

If moreover $\max_{1\leq j \leq k} \mu^*_j > \frac{1}{2}$ and $g^*$ is uniformly continuous, using the following inequality proved in the proof of Corrolary 1 in \citet{GaRo13}
\begin{equation*}
\begin{split}
\lVert D_1(\gamma^n,\gamma^*) \rVert_{L_1} 
 \geq & \left(2 \max_{1 \leq j \leq k} \mu_j^* -1\right) \lVert g^n - g^* \rVert_{L_1(\lambda)} \\
&-\max_{1\leq j \leq k} \lvert \mu^*_j - \mu^n_i \rvert
- \max_{1\leq j \leq k}\lVert g^*(\cdot-m_j^n) - g^*(\cdot-m_j^*) \rVert_{L_1(\lambda)} 
\end{split}
\end{equation*}
we obtain that $\lim _{n \to \infty} \lVert g^n - g^* \rVert_{L_1(\lambda)}=0$ which implies the last part of Corollary \ref{th3}.


\subsection*{Proof of Proposition \ref{th:tGM}}\label{PtGM}
As in the proof of Proposition \ref{th:MG}, many ideas come from \citet{To06}.
We first prove (A1) assuming that (B1), (B2), (B3), (B4) and (B5) are verified with $f_j(\cdot)=g(\cdot-m_j),~ 1 \leq j \leq k$. With the same ideas of the proof of Theorem 3.2 in \citet{To06}, for all $\epsilon>0$
there exists a probability $\tilde{P}$ on $\mathbb{R}\times(0,+\infty)$ 
such that there exists $0<\underline{\sigma}<\bar{\sigma}$ and $a>0$ satisfying
$$\tilde{P}((-a,a]\times(\underline{\sigma},\bar{\sigma}])=1$$
and 
$$\int g^*(y-m^*_i) \max_{1\leq j \leq k} \log \frac{g^*(y-m^*_j)}{\phi * \tilde{P} (y-m^*_j)} \lambda(dy)\leq \frac{\epsilon}{3},$$
using Assumptions (B2), (B3), (B4) and (B5).

Let $G=[-a,a]\times[\underline{\sigma},\bar{\sigma}]$.
Using the proof of Lemma 3.1 in \citet{To06} for all $C>\max_{1\leq j \leq k}\lvert m^*_j \rvert + a + \bar{\sigma}$, for all $m_j \in [m^*_j-a, m^*_j +a]$,
and for all $P$ such that $P(G)>\frac{\underline{\sigma}}{\bar{\sigma}}$,
\begin{equation}\label{eqplop}
\begin{split}
 \int_{\lvert y \rvert > C} & g^*(y-m^*_i) \max_{1 \leq j \leq k} \log \frac{\phi*\tilde{P}(y-m^*_j)}{\phi*P(y-m_j)}\lambda(dy) \\
 & \leq \int_{\lvert y \rvert > C}  g^*(y-m_i) \max_{1 \leq j \leq k} 
        \frac{1}{2} \left(\frac{\lvert y\rvert + \lvert m^*_j \rvert +2a}{\underline{\sigma}} \right)^2 \lambda(dy) <\infty
\end{split}
\end{equation}
Using assumption (B5) and Equation \eqref{eqplop}, we fix $C$ such that
$$\int_{\lvert y \rvert > C}  g^*(y-m^*_i) \max_{1 \leq j \leq k}
\log \frac{\phi*\tilde{P}(y-m^*_j)}{\phi*P(y-m_j)}\lambda(dy)\leq \frac{\epsilon}{3}$$

Let $G_\delta=[-a-\delta,a+\delta]\times[\underline{\sigma}-\delta,\bar{\sigma}+\delta]$,
with $\delta$ chosen in $(0,\min(\frac{\underline{\sigma}}{2},\frac{a}{2})]$. 
Let $\xi: \mathbb{R} \times (0,+\infty) \to [0,1]$
be a piecewise affine continuous function such that 
$\xi(z,\sigma)=1$ on $G$
and $\xi(z,\sigma)=0$ on $G_\delta^c$.
Let $$c=\inf_{\scriptsize \begin{array}{c} 
      \underline{\sigma} - \delta \leq \sigma \leq \bar{\sigma} + \delta, \\
      \lvert y \rvert \leq C, \\  
      \lvert \theta \rvert \leq a +\max_{j} \lvert m^*_j \rvert + \delta
      \end{array}}
          \phi_\sigma\left(y-\theta\right).$$
          
By Arzela-Ascoli theorem there exists $y_1, \dots, y_I$ such that for all $y \in [-C,C]$ and $1\leq j \leq k$, there exists $1 \leq i \leq I$
such that 
$$\sup_{(z, \sigma) \in G_\delta} \left\lvert \phi_\sigma\left(y-m^*_j-z \right) - \phi_\sigma\left(y_i-m^*_j-z \right)  \right\rvert < c\delta $$
Let 
\begin{equation*}
\begin{split}
V_\delta= &\bigg\{ P ~ : ~ \Big \lvert \int \xi(z,\sigma) \phi_{\sigma}(y_i-m^*_j-z)dP(z,\sigma) - \\
           & \qquad                \int \xi(z,\sigma) \phi_{\sigma}(y_i-m^*_j-z)d\tilde{P}(z,\sigma)   \Big\rvert < c\delta \bigg\}.
\end{split}                            
\end{equation*}

For all $P \in V_\delta$, for all 
$m_j \in \left[m^*_j - \frac{c \underline{\sigma} \delta \sqrt{2}}{\sqrt{\pi}},m^*_j + \frac{c \underline{\sigma} \delta \sqrt{2}}{\sqrt{\pi}}\right]$
 and for all $1\leq j \leq k$, we get
\begin{equation*}
\left\lvert \frac{\int \xi(z,\sigma) \phi_{\sigma}(y-m^*_j-z) dP(z, \sigma)}{\int \xi(z,\sigma) \phi_{\sigma}(y-m_j-z) d\tilde{P}(z, \sigma)} - 1\right\rvert
\leq 4 \delta
\end{equation*}
thus
\begin{equation*}
\begin{split}
 &\int_{\lvert y \rvert \leq C} g^*(y-m^*_i) \max_{1 \leq j \leq k} \log \frac{\phi*\tilde{P}(y-m^*_j)}{\phi*P(y-m^*_j)}\lambda(dy)\\
 &\quad \leq \int_{\lvert y \rvert \leq C} g^*(y-m^*_i) \max_{1 \leq j \leq k} 
			\log \frac{\int \xi(z,\sigma) \phi_\sigma(y-m^*_j-z)d\tilde{P}(z,\sigma)}{\int\xi(z,\sigma)\phi_\sigma(y-m^*_j-z)dP(z, \sigma)}\lambda(dy)\\
 & \quad  \leq \frac{4\delta}{1-4 \delta}			
 \end{split}
\end{equation*}

Then for $\delta$ small enough, for all $g=\phi*P$ such that $P \in V_\delta\cap \{P ~ : ~ P(G)>\frac{\underline{\sigma}}{\bar{\sigma}} \}=\tilde{V}_\delta$, for all 
$m_j \in \left[m^*_j - \frac{c \underline{\sigma} \delta \sqrt{2}}{\sqrt{\pi}},m^*_j + \frac{c \underline{\sigma} \delta \sqrt{2}}{\sqrt{\pi}}\right]=M_j^\delta$
 and for all $1\leq i \leq k$,
 \begin{equation}\label{EQ1}
\max_{1\leq i\leq k} \int g^*(y-m^*_i) \max_{1\leq j \leq k} \log \left(\frac{ g^*(y-m^*_j)}{g(y-m_j)} \right) dy < \epsilon, 
\end{equation}

moreover,
\begin{equation}\label{EQ2}
 \begin{split}
g(y-m_i) &  \geq \int_{G} \phi_\sigma(y-m_i-z)P(dz,d\sigma) \\
  &  \geq  \frac{\underline{\sigma}}{\bar{\sigma}}\phi_{\underline{\sigma}}
                      (\max(\lvert y-m_i -a \rvert, \lvert y-m_i + a \rvert))  P(G)\\
  & \geq \frac{\underline{\sigma}}{\bar{\sigma}}  \phi_{\underline{\sigma}}
                      (\max(\lvert y-m_i - a \rvert, \lvert y-m_i + a \rvert)) \frac{\underline{\sigma}}{\bar{\sigma}}>0.
 \end{split}
\end{equation}

Using assumption (B1) , there exists $\tilde{C}<0$ such that $g\leq \tilde{C}$ 
thus for all $P\in \tilde{V}_\delta$ and $m_j \in M_j^\delta$ for all $1 \leq  j\leq k$,

\begin{equation}\label{EQ3}
\begin{split}
 \sum_{i=1}^k & \mu^*_i \int g^*(y-m^*_i) \left\lvert 
 \log \left( \frac{1}{k} \sum_{j=1}^k g(y-m_j)\right) \right\rvert dy \\
 & \leq  \sum_{i=1}^k \mu^*_i \int g^*(y-m^*_i) \max_{1 \leq j\leq k} \bigg( \lvert \log(\tilde{C}) \rvert \\
  & \qquad +  2\log(\frac{\underline{\sigma}}{\bar{\sigma}}) +  \frac{(\max( y-m_j-a , y-m_j+a ))^2}{2 \underline{\sigma}^2} \bigg) \\
 & < \infty
\end{split}
 \end{equation}

Assumption (B1) ensures that (A1d) holds.

Finally for all $\epsilon>0$, there exists $\delta>0$ such that (A1) holds 
with $\Theta_\epsilon = \{ Q ~ : ~ \lVert Q -Q^* \rVert < \min(\epsilon, \underline{q}/2) \}
\times M_1^\delta \times \dots \times M_k^\delta \times \tilde{V}_\delta$ 
using Equations \eqref{EQ1}, \eqref{EQ2} and \eqref{EQ3}.

\vspace{5mm}
We now prove (C2) thanks to Assumption (D6).
Let 
$$\mathcal{F}_{a,l,u,\underline{m}} =[-\underline{m},\underline{m}]^k  \times \mathcal{F}_{a,l,u},$$
where $\mathcal{F}_{a,l,u}=\mathcal{F}_{a,l,u}^{2} $ is defined in the proof of Proposition \ref{th:MG}.
Note that for all $(m,\phi*P), (\tilde{m},\phi*\tilde{P}) \in \mathcal{F}_{a,l,u,\underline{m}}$, for all $1\leq i \leq k$,
\begin{equation*}
\begin{split}
 &\lVert \phi*P(\cdot -m_i)-\phi*\tilde{P}(\cdot - \tilde{m}_i) \rVert_{L_1(\lambda)}\\
 &\leq \lVert \phi*P(\cdot -m_i)-\phi*P(\cdot - \tilde{m}_i) \rVert_{L_1(\lambda)} 
             +\lVert \phi*P(\cdot)-\phi*\tilde{P}(\cdot) \rVert_{L_1(\lambda)}                 
 \end{split}
\end{equation*}
The second term is dealt with in the proof of Proposition \ref{th:MG}. As to the first part, we bound
\begin{equation*}
  \lVert \phi*P(\cdot -m_i)-\phi*P(\cdot - \tilde{m}_i) \rVert_{L_1(\lambda)} 
  \leq \frac{1}{l} \sqrt{\frac{2}{\pi}} \lvert m_i - \tilde{m}_i \rvert
\end{equation*}
Then for all $\kappa>0$, $a,l,u,\underline{m}>0$ such that $l<u$,
\begin{equation*}
 N(3\kappa, \mathcal{F}_{a,l,u,\underline{m}}, d)
 \leq \left(\frac{2 \underline{m}}{l\kappa}+1\right)^k  N(2\kappa, \mathcal{F}_{a,l,u}, \lVert \cdot \rVert_{L_1(\lambda)})
\end{equation*}
For all $\kappa>0$, 
let $$\mathcal{F}^\kappa_{a,l,u,\underline{m}} =[-\underline{m},\underline{m}]^k  \times \mathcal{F}^{\kappa}_{a,l,u}.$$
Following the ideas of Lemmas 4.1 and 4.2 in \citet{To06}, there exist $c_0,c_1,c_2,c_3$ only depending on $\kappa$ such that
\begin{equation*}
\log\left( N(\kappa,\mathcal{F}^\kappa_{a,l,u,\underline{m}}), d \right) 
\leq c_0 \left(c_1 k \log\frac{\underline{m}}{l} + c_2 \frac{a}{l} + c_3 \log \frac{u}{l} +1 \right),
\end{equation*}
so that (D6) implies (C2) with suitable choices of $\kappa$ and $\beta$.

\bibliographystyle{abbrvnat}
\bibliography{biblio2}

\begin{thebibliography}{18}
\providecommand{\natexlab}[1]{#1}
\providecommand{\url}[1]{\texttt{#1}}
\expandafter\ifx\csname urlstyle\endcsname\relax
  \providecommand{\doi}[1]{doi: #1}\else
  \providecommand{\doi}{doi: \begingroup \urlstyle{rm}\Url}\fi

\bibitem[Barron(1988)]{Ba88}
A.~Barron.
\newblock The exponential convergence of posterior probabilities with
  implications for bayes estimators of density functions.
\newblock Technical report, April 1988.

\bibitem[Baum and Petrie(1966)]{BaPe66}
L.~E. Baum and T.~Petrie.
\newblock Statistical inference for probabilistic functions of finite state
  markov chains.
\newblock \emph{The Annals of Mathematical Statistics}, 37\penalty0
  (6):\penalty0 1554--1563, 1966.

\bibitem[Capp\'e et~al.(2005)Capp\'e, Moulines, and Ryd\'en]{CaMoRy05}
O.~Capp\'e, E.~Moulines, and T.~Ryd\'en.
\newblock \emph{Inference in Hidden Markov Models}.
\newblock Springer, 2005.

\bibitem[de~Gunst and Shcherbakova(2008)]{GuSh08}
M.~C. de~Gunst and O.~Shcherbakova.
\newblock Asymptotic behavior of bayes estimators for hidden markov models with
  application to ion channels.
\newblock \emph{Mathematical Methods of Statistics}, 17\penalty0 (4):\penalty0
  342--356, 2008.

\bibitem[Douc and Matias(2001)]{DoMa01}
R.~Douc and C.~Matias.
\newblock Asymptotics of the maximum likelihood estimator for general hidden
  markov models.
\newblock \emph{Bernoulli}, 7:\penalty0 381--420, 2001.

\bibitem[Douc et~al.(2004)Douc, Moulines, and Ryd\'en]{Do04}
R.~Douc, E.~Moulines, and T.~Ryd\'en.
\newblock Asymptotic properties of the maximum likelihood estimator in
  autoregressive models with markov regime.
\newblock \emph{The Annals of statistics}, 32\penalty0 (5):\penalty0
  2254--2304, 2004.

\bibitem[Douc et~al.(2011)Douc, Moulines, Olsson, and van Handel]{DoMoOlHa11}
R.~Douc, E.~Moulines, J.~Olsson, and R.~van Handel.
\newblock Consistency of the maximum likelihood estimator for general hidden
  {M}arkov models.
\newblock \emph{The Annals of Statistics}, 39\penalty0 (1):\penalty0 474--513,
  2011.

\bibitem[Dudley(2002)]{Du}
R.~M. Dudley.
\newblock \emph{Real analysis and probability}, volume~74.
\newblock Cambridge University Press, 2002.

\bibitem[Dumont and Le~Corff(2012)]{DuCo12}
T.~Dumont and S.~Le~Corff.
\newblock Nonparametric regression on hidden phi-mixing variables:
  identifiability and consistency of a pseudo-likelihood based estimation
  procedure.
\newblock \emph{arxiv preprint arXiv:1209.0633}, 2012.

\bibitem[Gassiat and Rousseau(2013{\natexlab{a}})]{GaRo12}
E.~Gassiat and J.~Rousseau.
\newblock About the posterior distribution in hidden markov models with unknown
  number of states.
\newblock \emph{Bernoulli}, 2013{\natexlab{a}}.
\newblock to appear.

\bibitem[Gassiat and Rousseau(2013{\natexlab{b}})]{GaRo13}
E.~Gassiat and J.~Rousseau.
\newblock Non parametric finite translation hidden markov models and
  extensions.
\newblock \emph{Bernoulli}, 2013{\natexlab{b}}.
\newblock to appear.

\bibitem[Gassiat et~al.(2013)Gassiat, Cleynen, and Robin]{GaClRo13}
E.~Gassiat, A.~Cleynen, and S.~Robin.
\newblock Finite state space non parametric hidden markov models are in general
  identifiable.
\newblock \emph{arXiv preprint arXiv:1306.4657}, 2013.

\bibitem[Ghosh and Ramamoorthi(2003)]{GhRa03}
J.~Ghosh and R.~Ramamoorthi.
\newblock \emph{Bayesian Nonparametrics}.
\newblock Springer, 2003.

\bibitem[MacDonald and Zucchini(1997)]{macdonald:zucchini:1997}
I.~L. MacDonald and W.~Zucchini.
\newblock \emph{Hidden {M}arkov and other models for discrete-valued time
  series}.
\newblock Chapman and Hall/CRC, London, UK, 1997.

\bibitem[MacDonald and Zucchini(2009)]{macdonald:zucchini:2009}
I.~L. MacDonald and W.~Zucchini.
\newblock \emph{Hidden {M}arkov models for time series: an introduction using
  {R}}.
\newblock Chapman and Hall/CRC, London, UK, 2009.

\bibitem[Rio(2000)]{Ri00}
E.~Rio.
\newblock In{\'e}galit{\'e}s de hoeffding pour les fonctions lipschitziennes de
  suites d{\'e}pendantes.
\newblock \emph{Comptes Rendus de l'Acad{\'e}mie des Sciences-Series
  I-Mathematics}, 330\penalty0 (10):\penalty0 905--908, 2000.

\bibitem[Tokdar(2006)]{To06}
S.~T. Tokdar.
\newblock Posterior consistency of {D}irichlet location-scale mixture of
  normals in density estimation and regression.
\newblock \emph{Sankhy\=a}, 68\penalty0 (1):\penalty0 90--110, 2006.

\bibitem[Yau et~al.(2011)Yau, Papaspiliopoulos, Roberts, and
  Holmes]{YaPaRoHo11}
C.~Yau, O.~Papaspiliopoulos, G.~Roberts, and C.~Holmes.
\newblock Bayesian non-parametric hidden markov models with applications in
  genomics.
\newblock \emph{Journal of the Royal Statistical Society}, 73:\penalty0 37--57,
  2011.

\end{thebibliography}

\end{document}